\definecolor{OliveGreen}{rgb}{0,0.6,0}
\definecolor{Goldenrod}{rgb}{1,0,0}
\definecolor{rosa}{rgb}{0.788, 0, 0.616}
\newcommandx{\unsure}[2][1=]{\todo[linecolor=red,backgroundcolor=red!25,bordercolor=red,#1]{#2}}
\newcommandx{\change}[2][1=]{\todo[linecolor=blue,backgroundcolor=blue!25,bordercolor=blue,#1]{#2}}
\newcommandx{\info}[2][1=]{\todo[linecolor=OliveGreen,backgroundcolor=OliveGreen!25,bordercolor=OliveGreen,#1]{#2}}
\newcommand{\centre}[1]{\begin{array}{c} #1 \end{array}}
\newtheorem{theorem}{Theorem}[section]
\newtheorem{proposition}[theorem]{Proposition}
\newtheorem{lemma}[theorem]{Lemma}
\newtheorem{corollary}[theorem]{Corollary}
\theoremstyle{definition}
\newtheorem{example}[theorem]{Example}
\theoremstyle{remark}
\newtheorem{remark}[theorem]{Remark}
\numberwithin{equation}{section}
\newcommand{\id}{\mathrm{Id}}
\renewcommand{\to}{\longrightarrow}
\newcommand{\uhat}{\underaccent{\check}}
\newcommand{\cupr@tip}{\text{\raisebox{-0.1ex}{$\m@th\hat{}$}}}
\newcommand{\cupr}{\mathbin{\cup\cupr@}}
\newcommand{\cupr@}{%
  \mathchoice
  {\mkern-1.35mu\cupr@tip}
  {\mkern-1.35mu\cupr@tip}
  {\mkern-1.55mu\cupr@tip}
  {\mkern-1.875mu\cupr@tip}
}
\newcommand{\capr@tip}{\text{\raisebox{0.47ex}{$\m@th\uhat{}$}}}
\newcommand{\capr}{\mathbin{\capr@\cap}}
\newcommand{\capr@}{%
  \mathchoice
  {\mkern11.6mu\capr@tip\mkern-11.6mu}
  {\mkern11.4mu\capr@tip\mkern-11.4mu}
  {\mkern11.1mu\capr@tip\mkern-11.1mu}
  {\mkern10.2mu\capr@tip\mkern-10.2mu}
}
\newcommand{\capl@tip}{\text{\raisebox{0.47ex}{$\m@th\uhat{}$}}}
\newcommand{\capl}{\mathbin{\capl@\cap}}
\newcommand{\capl@}{%
  \mathchoice
  {\mkern2.1mu\capl@tip\mkern-2.1mu}
  {\mkern2.1mu\capl@tip\mkern-2.1mu}
  {\mkern2.3mu\capl@tip\mkern-2.3mu}
  {\mkern2.1mu\capl@tip\mkern-2.1mu}
}
\newcommand{\cupl@tip}{\text{\raisebox{-0.1ex}{$\m@th\hat{}$}}}
\newcommand{\cupl}{\mathbin{\cupl@\cup}}
\newcommand{\cupl@}{%
  \mathchoice
  {\mkern1.35mu\cupl@tip\mkern-1.35mu}
  {\mkern1.35mu\cupl@tip\mkern-1.35mu}
  {\mkern1.55mu\cupl@tip\mkern-1.55mu}
  {\mkern1.875mu\cupl@tip\mkern-1.875mu}
}
\DeclareFontFamily{U}{mathx}{}
\DeclareFontShape{U}{mathx}{m}{n}{ <-> mathx10 }{}
\DeclareSymbolFont{mathx}{U}{mathx}{m}{n}
\DeclareMathAccent{\widecheck}{0}{mathx}{"71}
\tikzset{double line with arrow/.style args={#1,#2}{decorate,decoration={markings,%
mark=at position 0 with {\coordinate (ta-base-1) at (0,1pt);
\coordinate (ta-base-2) at (0,-1pt);},
mark=at position 1 with {\draw[#1] (ta-base-1) -- (0,1pt);
\draw[#2] (ta-base-2) -- (0,-1pt);
}}}}
\begin{document}


\title[Knot invariants from XC-structures on  $SW$  are trivial]{Knot invariants from XC-structures on the Sweedler algebra are trivial}


\date{\today}

\author{Jorge Becerra}
\address{Université Bourgogne Europe, CNRS, IMB UMR 5584, F-21000 Dijon, France}
\email{\href{mailto:Jorge.Becerra-Garrido@ube.fr}{Jorge.Becerra-Garrido@ube.fr}}
\urladdr{ \href{https://sites.google.com/view/becerra/}{https://sites.google.com/view/becerra/}} 




\begin{abstract}
An XC-algebra is the minimum algebraic structure needed to define a framed, oriented knot invariant and generalises Lawrence's invariant obtained from ribbon Hopf algebras. In this note, we show that the knot invariant produced by any XC-structure on the Sweedler algebra is completely determined by the framing of the knot. Furthermore, we also exhibit explicit families of XC-structures on the Sweedler algebra that do not have a ribbon Hopf-algebraic origin.
\end{abstract}

\keywords{XC-algebra, Sweedler algebra, knot invariants}
\subjclass{16T05, 16T30, 18M15, 57K10, 57K16}


\maketitle

\setcounter{tocdepth}{1}
\tableofcontents


\section{Introduction}

It is folklore in quantum topology that \textit{ribbon Hopf algebras} give rise to knot invariants in two different ways: one using their representation theory, via the celebrated \textit{Reshetikhin-Turaev functor} \cite{RT}, and the other using the algebra itself, via Lawrence's \textit{universal invariant} \cite{lawrence,habiro}.   It is well-known that the latter  dominates the family of Reshetikhin-Turaev invariants,
 and furthermore it is the invariant underlying the construction of one of the strongest polynomial-time knot polynomial invariants up to date \cite{barnatanveenpolytime, barnatanveengaussians, becerra_gaussians}.

However, a much weaker algebraic structure is in fact needed to produce an isotopy invariant of framed, oriented, long knots following the construction of the universal invariant, namely an \textit{XC-algebra}
\cite{becerra_thesis, becerra_refined,BH_reidemeister, becerra_XC,bosch}.
We recall the definition here: given an algebra $A$ over some ring $\Bbbk$, an \textit{XC-structure} on $A$ is the choice of two invertible elements $$R \in A\otimes A \qquad , \qquad \kappa \in  A  $$ 
satisfying 
\begin{enumerate}[leftmargin=4\parindent, itemsep=2mm]
\item[(XC0)] $R^{\pm 1}=(\kappa \otimes \kappa) \cdot R^{\pm 1} \cdot (\kappa^{-1} \otimes \kappa^{-1})$,
\item[(XC1f)] $\sum_i \beta_i \kappa \alpha_i = \sum_i \alpha_i \kappa^{-1} \beta_i   $ ,
\item[(XC2c)] $  1\otimes \kappa^{-1} =  \sum_{i,j}  \alpha_i \bar{\alpha}_j \otimes \bar{\beta}_j \kappa^{-1} \beta_i    $,
\item[(XC2d)] $\kappa \otimes 1 =   \sum_{i,j}   \bar{\alpha}_i \kappa  \alpha_j \otimes \beta_j \bar{\beta}_i$,
\item[(XC3)] $R_{12}R_{13}R_{23}=R_{23}R_{13}R_{12}$,
\end{enumerate}
where we have put $R = \sum_i \alpha_i \otimes \beta_i$ and $R^{-1} = \sum_i \bar{\alpha}_i \otimes \bar{\beta}_i$. The element $\nu := \sum_i \beta_i \kappa \alpha_i$ from (XC1f) is called the \textit{inverse of the classical ribbon element}\index{inverse of the classical ribbon element}. The triple $(A, R, \kappa)$ consisting of  a $\Bbbk$-algebra $A$ and an XC-structure on it is called an \textit{XC-algebra}. Ribbon Hopf algebras and endomorphism algebras of finite-dimensional representations of these are natural examples of XC-algebras \cite{becerra_refined}.

Let $K$ be an (oriented, framed, long) knot in a cube. We will be interested in \textit{rotational diagrams} of $K$, that is, diagrams with the property that every point has a neighbourhood where the diagram looks exactly like one of the following building blocks:
\begin{equation}\label{eq:crossings_and_spinners}
\centre{
\labellist \small \hair 2pt
\pinlabel{$I$}  at 20 -78
\pinlabel{$X$}  at 350 -78
 \pinlabel{$ X^- $}  at 800 -78
 \pinlabel{$ C$}  at 1140 -78
  \pinlabel{$ C^- $}  at 1560 -78
\endlabellist
\centering
\includegraphics[width=0.6\textwidth]{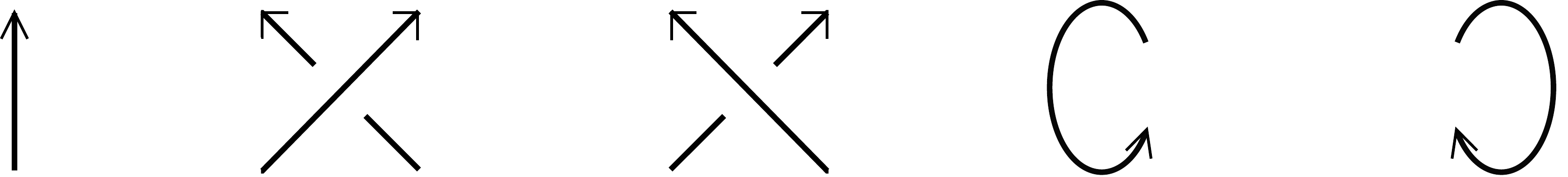}}
\end{equation}
\vspace*{0.5cm}

\noindent By \cite[Lemma 3.2]{becerra_refined}, any knot has a rotational diagram. Furthermore, we can always remain in the realm of rotational knot diagrams by considering sets of \textit{rotational Reidemeister moves}, see \cite{BH_reidemeister} for an account.

Let us recall now the  construction of the universal invariant subject to a given XC-algebra $(A, R, \kappa)$. Given  a knot $K$, consider $D$ a rotational diagram of it, and decorate the building blocks $I, X^{\pm}, C^{\pm}$ from above with beads representing the elements $1, R^{\pm 1}, \kappa^{\mp 1}$ as follows: 
\begin{equation}\label{eq:beads}
\centre{
\labellist \small \hair 2pt
\pinlabel{$ \color{violet} \bullet$} at 16 30
\pinlabel{$ \color{violet} 1$} at -26 30
\pinlabel{$ \color{violet} \bullet$} at 294 30
\pinlabel{$ \color{violet} \bullet$} at 406 30
\pinlabel{$ \color{violet} \alpha_i$} [r] at 279 30
\pinlabel{$ \color{violet} \beta_i$} [l] at 421 30
\pinlabel{$ \color{violet} \bullet$} at 711 30
\pinlabel{$ \color{violet} \bullet$} at 825 30
\pinlabel{$ \color{violet} \bar{\beta}_i$} [r] at 696 30
\pinlabel{$ \color{violet} \bar{\alpha}_i$} [l] at 850 30
\pinlabel{$ \color{violet} \bullet$} at 1083 92
\pinlabel{$ \color{violet} \kappa^{-1}$} [r] at 1090 120
\pinlabel{$ \color{violet} \bullet$} at 1599 92
\pinlabel{$ \color{violet} \kappa$} [l] at 1614 104
\endlabellist
\centering
\includegraphics[width=0.6\textwidth]{building_blockss}}
\end{equation}
\vspace*{-10pt}

\noindent Then let $\mathfrak{Z}_A(K) \in A$ be the element obtained from multiplying from right to left the beads along the diagram following the orientation of the knot (adding summations  accounting for the different copies of $R$). For instance, in the case of the figure-of-eight knot $4_1$ we have

\vspace{0.3cm}\noindent
 \begin{minipage}{.45\textwidth}
 \begin{equation*} 
\labellist \small  \hair 2pt
\pinlabel{$ \color{rosa} \bullet$} at 222 162
\pinlabel{$ \color{rosa} \alpha_i$} at 140 162
\pinlabel{$ \color{rosa} \bullet$} at 308 162
\pinlabel{$ \color{rosa} \beta_i$} at 390 162
\pinlabel{$ \color{blue} \bullet$} at 347 305
\pinlabel{$ \color{blue} \bar{\alpha}_j$} at 520 305
\pinlabel{$ \color{blue} \bullet$} at 437 305
\pinlabel{$ \color{blue} \bar{\beta}_j$} at 275 305
\pinlabel{$ \color{OliveGreen} \bullet$} at 226 461
\pinlabel{$ \color{OliveGreen} \alpha_\ell$} at 140 461
\pinlabel{$ \color{OliveGreen} \bullet$} at 304 461
\pinlabel{$ \color{OliveGreen} \beta_\ell$} at 370 461
\pinlabel{$ \color{Goldenrod} \bullet$} at 343 605
\pinlabel{$ \color{Goldenrod} \bar{\alpha}_r$} at 510  605
\pinlabel{$ \color{Goldenrod} \bullet$} at 435 605
\pinlabel{$ \color{Goldenrod} \bar{\beta}_r$} at 270 605
\pinlabel{$ \color{orange} \bullet$} at 2 362
\pinlabel{$ \color{orange} \kappa^{-1}$} at -90 362
\pinlabel{$ \color{orange} \bullet$} at 647 513
\pinlabel{$ \color{orange} \kappa$} at 699 513
\endlabellist
\includegraphics[width=0.45\textwidth]{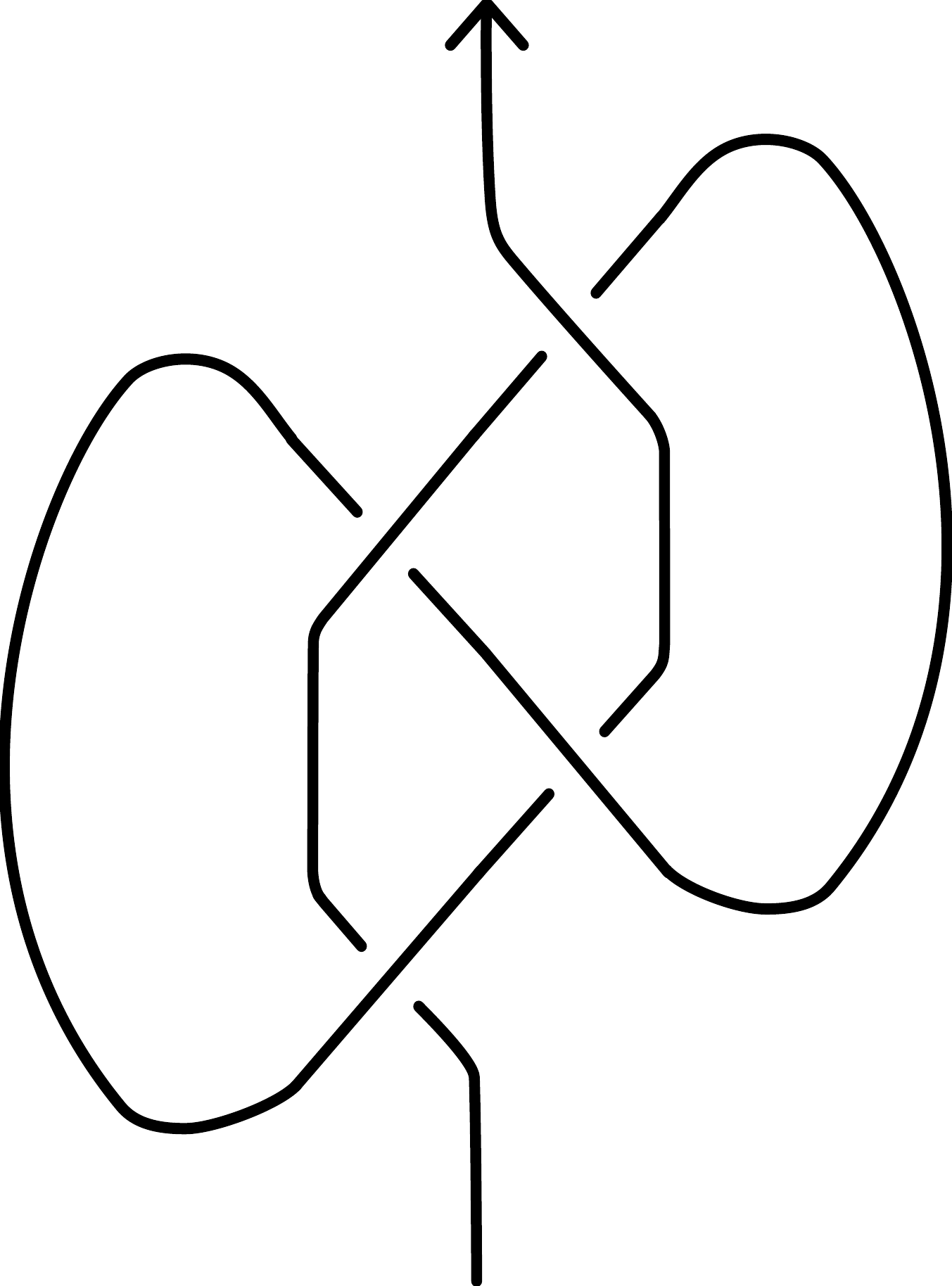} 
\end{equation*}
 \end{minipage}
  \begin{minipage}{.45\textwidth}
 $$\mathfrak{Z}_A(4_1) = \sum_{\color{rosa}{i} \color{black}{,} \color{blue}{j} \color{black}{,} \color{OliveGreen}{\ell} \color{black}{,} \color{Goldenrod}{r}} 
  \color{Goldenrod}{\bar{\alpha}_r} \  
  \color{blue}{\bar{\beta}_j}   \   
           \color{rosa}{\alpha_i} \
        \color{orange}{\kappa^{-1}} \
   \color{OliveGreen}{\beta_\ell}  \
     \color{blue}{\bar{\alpha}_j} \ 
             \color{orange}{\kappa} \
               \color{Goldenrod}{\bar{\beta}_r} \ 
               \color{OliveGreen}{\alpha_\ell}\ 
      \color{rosa}{\beta_i} \
            \color{black}{.}
$$
 \end{minipage}
 \vspace{0.3cm}

\noindent The reason why XC-algebras are the minimal algebraic structure needed to produce a knot invariant using Lawrence's construction is simply that the axioms  (XC0) -- (XC3) together with the invertibility of $R$ and $\kappa$ are precisely the algebraic counterparts via $\mathfrak{Z}_A$ of the rotational Reidemeister moves displayed in  \cite[\S 4.3]{BH_reidemeister}. Note that $\mathfrak{Z}_A (\begin{array}{c}
\centering
\includegraphics[scale=0.035]{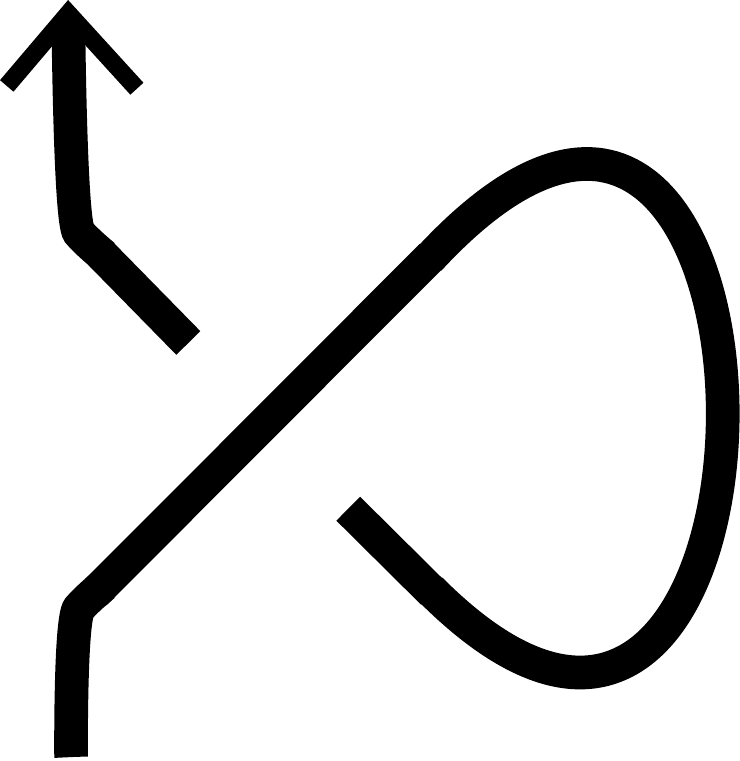} \end{array}) = \nu$.

The goal of this paper is to study XC-structures on Sweedler's four-dimensional complex  algebra $$SW:= \langle s,w | s^2=1, \ w^2=0, \ sw=-ws  \rangle,$$ and the universal invariant derived from them.  XC-structures on $SW$ are given by the solutions of a system of non-linear equations in 40 unknowns. 
We exhibit several multi-parameter families of  XC-structures on $SW$ that do not have a ribbon Hopf-algebraic origin, that is, that do not come from a ribbon Hopf algebra structure on $SW$ or from an endomorphism algebra of another ribbon Hopf algebra, see \cref{ex:1} -- \ref{ex:4}. 

The universal $R$-matrix of the standard ribbon Hopf algebra structure on $SW$ is triangular and therefore  produces a trivial knot invariant, see \cref{lem:triangular} for a general proof of this fact. However, most  XC-structures on $SW$ are not triangular. The main result of this note is that, surprisingly,  \textit{any} XC-structure on the Sweedler algebra produces a trivial invariant:

\begin{theorem}[\cref{thm:main}]
Any XC-algebra structure on the Sweedler algebra produces a framed knot invariant that only depends on  the framing:
$$ \mathfrak{Z}_{SW}(K)= \nu^{\mathrm{fr}(K)} . $$ In particular, this invariant is trivial for any 0-framed knot.
\end{theorem}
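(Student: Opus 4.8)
The plan is to exploit the radical filtration of $SW$. Write $SW = S \oplus J$, where $S = \langle 1, s\rangle$ is the semisimple subalgebra (isomorphic to $\mathbb{C}\times\mathbb{C}$, with orthogonal idempotents $e_\pm = (1\pm s)/2$) and $J = \langle w, sw\rangle$ is the Jacobson radical. The two features I will use are that $J$ is a two-sided ideal and that $J^2 = 0$, since every product of two elements of $\{w, sw\}$ vanishes. Consequently, any product in $SW$ containing two or more factors lying in $J$ is zero: taking the first two such factors, the intervening factors all lie in $S$ and can be absorbed into the first radical factor (as $JS, SJ\subseteq J$) before one hits $J\cdot J = 0$. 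First I would decompose every bead $1,\,R^{\pm1},\,\kappa^{\mp1}$ into its $S$- and $J$-components and expand $\mathfrak{Z}_{SW}(K)$ accordingly; by the previous remark, only the terms with at most one radical factor survive, so $\mathfrak{Z}_{SW}(K) = Z_0 + Z_1$ with $Z_0\in S$ and $Z_1 \in J$.

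For the degree-zero part, I would observe that $Z_0$ is exactly the image of $\mathfrak{Z}_{SW}(K)$ under the algebra projection $\pi\colon SW\to SW/J\cong S$, hence equals the universal invariant attached to the projected XC-structure $(\bar{R},\bar{\kappa})$ on the commutative algebra $\bar{A} := SW/J$. Because $\bar{A}$ is commutative, the multiplication map $m\colon \bar{A}\otimes\bar{A}\to\bar{A}$ is an algebra homomorphism and all beads commute; collecting the two beads of each crossing shows that every positive (resp.\ negative) crossing contributes the scalar $m(\bar{R})$ (resp.\ $m(\bar{R}^{-1})$), while $m(\bar{R})\,m(\bar{R}^{-1}) = m(\bar{R}\,\bar{R}^{-1}) = 1$. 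Hence opposite crossings cancel and $Z_0 = m(\bar{R})^{w}\,\bar{\kappa}^{\,\sigma}$, where $w$ is the writhe and $\sigma$ the signed number of spinners. Using that $(\bar{R},\bar{\kappa})$ satisfies the XC-axioms — in particular (XC1f) forces $\bar{\kappa}^2 = 1$ — together with isotopy invariance and the normalisation $\mathfrak{Z}(\text{positive curl}) = \nu$, this collapses to $Z_0 = \bar{\nu}^{\,\mathrm{fr}(K)}$, where $\bar{\nu} = \bar{\kappa}\, m(\bar{R})$ is the image of $\nu$. Note that one cannot simply invoke \cref{lem:triangular} here, since the reduced $\bar{R}$ need not be triangular; what makes the commutative reduction collapse is precisely the identity $m(\bar{R})m(\bar{R}^{-1}) = 1$.

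It remains to identify the radical-linear term $Z_1\in J$, and this is where I expect the real work to lie. Writing $\nu = \bar{\nu} + \nu_1$ with $\nu_1\in J$ and using $J^2 = 0$ gives $\nu^{\mathrm{fr}(K)} = \bar{\nu}^{\,\mathrm{fr}(K)} + \mathrm{fr}(K)\,\bar{\nu}^{\,\mathrm{fr}(K)-1}\nu_1$, so the theorem is equivalent to the first-order identity $Z_1 = \mathrm{fr}(K)\,\bar{\nu}^{\,\mathrm{fr}(K)-1}\nu_1$. The term $Z_1$ is a sum of single radical insertions against the commutative background of the previous step, i.e.\ a ``derivative'' of the invariant in the radical direction. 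To evaluate it I would use the $\bar{A}$-bimodule structure of $J$: since $J = e_+ SW e_- \oplus e_- SW e_+$ is concentrated in the off-diagonal (quiver-arrow) components, a single radical bead toggles the idempotent sector $e_+ \leftrightarrow e_-$, which severely restricts which insertions are nonzero. The main obstacle is to show that, after imposing the first-order consequences of (XC0)--(XC3) on $(R,\kappa)$ — equivalently, using the explicit parametrisation of the solution set of the defining system — all surviving single-insertion contributions telescope into the single framing-derivative term above. Granting this, $\mathfrak{Z}_{SW}(K) = Z_0 + Z_1 = \nu^{\mathrm{fr}(K)}$, and the invariant is trivial whenever $\mathrm{fr}(K) = 0$.
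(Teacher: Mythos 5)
Your reduction has a genuine gap at precisely the point where the theorem's content lies. The degree-zero part of your argument is sound: since $J$ is an ideal with $J^2=0$, the projection $\pi\colon SW\to SW/J$ is an algebra map carrying $(R,\kappa)$ to an XC-structure on a commutative algebra, and $Z_0=\pi(\mathfrak{Z}_{SW}(K))=\bar\nu^{\mathrm{fr}(K)}$ is exactly \cref{thm:invariant_comm_XC} applied to the quotient. But the radical-linear term $Z_1$ is where the whole theorem lives, and you explicitly defer it (``Granting this\dots''). The claim that the single-insertion contributions telescope into $\mathrm{fr}(K)$ times a derivative term is \emph{not} a formal consequence of the filtration: a single radical bead $x\in J$ sitting inside a commutative background splits the word as $axb$ with $a,b\in S$, and since $sws^{-1}=-w$ (conjugation by $s$ acts by $-1$ on $J$), the value of $axb$ depends on the parities of $a$ and $b$ \emph{separately}, i.e.\ on the position of the insertion in the Gauss word of the diagram, not merely on the writhe. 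So a priori $Z_1$ can see genuine knotting data, and killing that dependence requires the first-order consequences of (XC0)--(XC3). This is exactly what the paper's proof supplies, and it is not soft: \cref{prop:commutators} (the commutators $[R^{\pm1},\kappa^{\pm1}\otimes 1]$, $[R^{\pm1},1\otimes\kappa^{\pm1}]$ lie in $J\otimes J$) and \cref{prop:permuting_R} (Greek letters with distinct indices commute modulo terms annihilated by \cref{lem:xzy}) are established by Gr\"obner-basis computations on the polynomial system cut out by the axioms, including a case split when $\lambda_2=0$ that forces either $\lambda_2=\lambda_3=\lambda_4=0$ or $\mu_{12}=\mu_{21}=\mu_{22}=0$ and a finite list of parametric solution families. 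With those two propositions, the paper moves all $\kappa^{\pm1}$ to the front and collects each $R^{\pm1}$ into $\theta=\sum_i\alpha_i\beta_i$ or $\xi=\sum_i\beta_i\alpha_i$, then concludes via $\theta=\kappa^2\xi$, $\nu=\kappa\xi$, and $rot(D)+wr(D)=2(n_{+,1}-n_{-,1})$; your filtration argument would need an input of comparable strength to evaluate $Z_1$, and none is given.

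A secondary but real slip: your identity $\nu^{\mathrm{fr}(K)}=\bar\nu^{\,\mathrm{fr}(K)}+\mathrm{fr}(K)\,\bar\nu^{\,\mathrm{fr}(K)-1}\nu_1$ is false in general, because $\bar\nu\in S$ need not commute with $\nu_1\in J$ (e.g.\ in \cref{ex:1} one has $\bar\nu=-1-(1+\lambda)s$ and $\nu_1=-(1+\lambda)(w+sw)$, which anticommute up to the $s$-twist). The correct expansion is $\nu^{n}=\bar\nu^{\,n}+\sum_{k=0}^{n-1}\bar\nu^{\,k}\nu_1\bar\nu^{\,n-1-k}$, so even the target first-order identity must be restated before one can try to prove it. In short: your strategy (radical filtration, commutative quotient, first-order deformation term) is a legitimate and genuinely different organisation from the paper's bead-shuffling argument, but as written it proves only the easy half and asserts the hard half.
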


That is, despite $SW$ being non-commutative, the universal invariant subject to any XC-structure on $SW$ behaves as one in a commutative algebra, compare with  \cref{prop:comm_XC}. As immediate consequence, any ribbon Hopf algebra structure on $SW$ and any Reshetikhin-Turaev invariant obtained from a representation of one of these  will also return a trivial knot invariant.

\subsection*{Acknowledgments} The author would like to thank Dror Bar-Natan and Roland van der Veen for creating and making available the Mathematica implementation to compute universal knot invariants. The author was supported by the ARN project CPJ number ANR-22-CPJ1-0001-0  at the Institut de Mathématiques de Bourgogne (IMB). The IMB receives support from the EIPHI Graduate School (contract ANR-17-EURE-0002).


\section{Commutative XC-algebras}

We would like to start by giving a classification result about XC-structures on commutative algebras and describing the universal invariant for each of them. This will be instructing since, as we explained in the Introduction, the main purpose of this note is to show that the universal invariant subject to any XC-structure on the Sweedler algebra behaves as one in a commutative algebra.

\begin{proposition}\label{prop:comm_XC}
Let $A$ be a commutative algebra, let $R \in A\otimes A$ be an arbitrary invertible element  and let $\kappa \in A$ be a square root of unity. Then the triple $(A,R, \kappa)$ is an XC-algebra. Furthermore, any XC-structure over $A$ is of this form.
\end{proposition}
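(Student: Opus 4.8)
The plan is to exploit commutativity to trivialise four of the five axioms and to isolate the single constraint that forces $\kappa^2 = 1$. Throughout I write $R = \sum_i \alpha_i \otimes \beta_i$ and $R^{-1} = \sum_i \bar\alpha_i \otimes \bar\beta_i$, and I use repeatedly that if $A$ is commutative then so are $A \otimes A$ and $A \otimes A \otimes A$, together with the defining relations $RR^{-1} = R^{-1}R = 1 \otimes 1$.

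For the first assertion (every such triple is an XC-algebra) I would verify the five axioms one at a time, assuming only that $\kappa$ is invertible and, where needed, that $\kappa^2 = 1$. Axiom (XC0) is immediate because $\kappa \otimes \kappa$ is central in $A \otimes A$, so it cancels against $\kappa^{-1} \otimes \kappa^{-1}$; this uses nothing about $\kappa$ beyond invertibility. Axiom (XC3) is immediate because the three elements $R_{12}, R_{13}, R_{23}$ pairwise commute in the commutative algebra $A \otimes A \otimes A$. For (XC2c) and (XC2d) the strategy is to factor out the central element $1 \otimes \kappa^{-1}$ (respectively $\kappa \otimes 1$) and recognise the remaining double sum, after reordering the tensor legs by commutativity, as $RR^{-1}$ (respectively $R^{-1}R$); both collapse to $1 \otimes 1$ and the claimed identities drop out. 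Only (XC1f) sees the order of $\kappa$: commuting the beads rewrites it as $\kappa \cdot m(R) = \kappa^{-1} \cdot m(R)$, where $m \colon A \otimes A \to A$ is the multiplication and $m(R) = \sum_i \alpha_i \beta_i$, and this holds precisely because $\kappa = \kappa^{-1}$.

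For the converse (any XC-structure on commutative $A$ has $\kappa^2 = 1$), the key observation is that when $A$ is commutative the multiplication $m \colon A \otimes A \to A$ is an \emph{algebra homomorphism}. Hence it carries the unit $R$ to a unit, namely $m(R)$ with inverse $m(R^{-1})$. Rewriting (XC1f) exactly as above gives $(\kappa - \kappa^{-1}) \, m(R) = 0$, and cancelling the invertible element $m(R)$ forces $\kappa = \kappa^{-1}$, that is $\kappa^2 = 1$. Since $R$ and $\kappa$ are otherwise unconstrained invertible elements, this recovers precisely the stated form.

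The main (and essentially only) subtlety is this converse step: one must notice that commutativity is exactly what promotes $m$ to an algebra map, and hence what guarantees $m(R)$ is a cancellable unit. Everything else is a direct, if mildly tedious, bookkeeping exercise with $RR^{-1} = R^{-1}R = 1$ and the centrality of the $\kappa$-factors.
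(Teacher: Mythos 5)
Your proof is correct and follows essentially the same route as the paper's: the axioms (XC0)--(XC3) collapse by commutativity, and the converse extracts $\kappa^2=1$ from (XC1f) by cancelling the unit $m(R)$. Your observation that $m$ is an algebra homomorphism is just a conceptual repackaging of the paper's direct verification that $\theta=\sum_i\alpha_i\beta_i$ is invertible with inverse $\sum_i\bar\alpha_i\bar\beta_i$ via $RR^{-1}=1\otimes 1$.
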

\begin{proof}
First we note that $\kappa$ is invertible with inverse $\kappa^{-1}= \kappa$. The axioms (XC0) -- (XC2d) are automatic, for instance for the latter we have 
$$\sum_{i,j}   \bar{\alpha}_i \kappa  \alpha_j \otimes \beta_j \bar{\beta}_i = (\kappa \otimes 1)\left( \sum_{i,j}   \bar{\alpha}_i  \alpha_j \otimes  \bar{\beta}_i  \beta_j\right) = (\kappa \otimes 1)  R^{-1} R= \kappa \otimes 1.$$ Lastly for the Yang-Baxter equation (XC3)
\begin{align*}
R_{12}R_{13}R_{23} &= \sum_{i,j,k} \alpha_i \alpha_j \otimes \beta_i \alpha_k \otimes \beta_j \beta_k  = \sum_{i,j,k}  \alpha_j \alpha_i \otimes  \alpha_k \beta_i\otimes \beta_k \beta_j \\ &= \sum_{i,j,k}  \alpha_j \alpha_k \otimes  \alpha_i \beta_k\otimes \beta_i \beta_j =  R_{23}R_{13}R_{12}.
\end{align*}
For the last part of the statement, we first consider the element $\theta:= \sum_i \alpha_i \beta_i$. We claim that $\theta$ is invertible with inverse $u:= \sum_i \bar{\alpha}_i \bar{\beta}_i$, indeed this is an immediate consequence of the equations $R R^{-1}=1 \otimes 1 = R^{-1}R$. The axiom (XC1f) then implies that $$\kappa \theta = \sum_i \beta_i \kappa \alpha_i = \sum_i \alpha_i \kappa^{-1} \beta_i  = \kappa^{-1} \theta , $$ that is, $\kappa$ is a square root of unity.
\end{proof}

\begin{remark}
If $A$ is a commutative ribbon Hopf algebra, then the element $u$ in the proof is precisely the Drinfeld element $u:= \sum_i S(\beta_i) \alpha_i$  of the structure. Indeed, since $A$ is commutative, the antipode $S$ is an involution, so 
$$\sum_i S(\beta_i) \alpha_i = \sum_i S^{-1}(\beta_i) \alpha_i = \sum_i \bar{\beta}_i \bar{\alpha}_i = \sum_i  \bar{\alpha}_i \bar{\beta}_i$$ using the well-known equality $(\id \otimes S^{-1})(R)=R^{-1}$.
\end{remark}

For commutative XC-algebras, the universal invariant turns out to be very weak. We believe this is known to experts in the ribbon Hopf algebra setting but we include a proof due to lack of reference.

\begin{theorem}\label{thm:invariant_comm_XC}
Let $(A,R,\kappa)$ be a commutative XC-algebra. Then for any (long) knot $K$, its universal invariant $\mathfrak{Z}_A(K)$ is fully determined by its framing $\mathrm{fr}(K)$; more precisely, we have
$$\mathfrak{Z}_A(K) = \nu^{\mathrm{fr}(K)}.$$
In particular, the universal invariant is trivial for any 0-framed knot.
\end{theorem}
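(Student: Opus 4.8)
The plan is to exploit the crucial feature of the commutative setting established in \cref{prop:comm_XC}: the element $\nu = \sum_i \beta_i \kappa \alpha_i = \kappa \theta$, where $\theta = \sum_i \alpha_i \beta_i$, is a \emph{central} element (everything is central in a commutative algebra). Since $\mathfrak{Z}_A(K)$ is obtained by multiplying beads along the diagram, and the order of multiplication is irrelevant when $A$ is commutative, the invariant collapses to a product of the individual bead contributions, each taken once with the appropriate multiplicity. Concretely, I would argue that in the commutative case
\begin{equation*}
\mathfrak{Z}_A(K) = \theta^{\,p}\, (u)^{\,n}\, \kappa^{-(p-n)},
\end{equation*}
where $p$ and $n$ are the numbers of positive and negative crossings $X, X^-$, and $p-n$ is the signed count tracking the $C^{\pm}$ spinners (which carry $\kappa^{\mp 1}$). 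The key combinatorial input is that each positive crossing contributes one factor $R = \sum_i \alpha_i \otimes \beta_i$ whose two tensor legs, once multiplied together along the single strand of a knot, give $\theta$; similarly each negative crossing contributes $u = \sum_i \bar\alpha_i \bar\beta_i$.

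\emph{First} I would make precise how the bead product simplifies. Because $K$ is a long \emph{knot} (a single strand), following the orientation eventually multiplies together both legs $\alpha_i$ and $\beta_i$ of every $R$, and both legs of every $R^{-1}$; in a commutative algebra this is exactly $\theta$ per positive crossing and $u$ per negative crossing, regardless of how the strands are interleaved in the diagram. The spinner beads contribute $\kappa^{-1}$ for each $C$ and $\kappa$ for each $C^-$. \emph{Next}, I would invoke the rotation number / writhe bookkeeping of rotational diagrams: the signed count of spinners equals (up to the convention fixed by the building blocks) the rotation number of the diagram, while the signed count of crossings equals the writhe, and the two combine so that $\mathrm{fr}(K)$ — the framing, i.e.\ self-linking, which is writhe plus rotation in the rotational formalism — governs the total exponent.

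\emph{Then} comes the arithmetic core: using $\theta = \kappa^{-1}\nu$ (equivalently $\nu = \kappa\theta$) from the proof of \cref{prop:comm_XC}, together with $u = \theta^{-1}$ and $\kappa^{-1}=\kappa$ (so $\kappa$ is a square root of unity), I would rewrite every $\theta$, $u$ and $\kappa^{\pm1}$ factor in terms of $\nu$ and $\kappa$. The $\theta$-factors from positive crossings and the $u=\theta^{-1}$-factors from negative crossings partially cancel, leaving $\theta^{\,p-n}$; pairing this with the spinner factors $\kappa^{-(p-n)}$ and using $\nu = \kappa\theta$ yields $\nu^{\,p-n}$ times a residual power of $\kappa$ which must vanish by the square-root relation $\kappa^2=1$ combined with the parity constraint that, for a genuine knot diagram, $\mathrm{fr}(K)=p-n$ matches the signed spinner count. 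The upshot is $\mathfrak{Z}_A(K)=\nu^{\mathrm{fr}(K)}$.

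\emph{The hard part} will be the careful bookkeeping of the second step: justifying rigorously that, on a single-component long knot, the commutative bead product really does pair each $R$-leg $\alpha_i$ with its partner $\beta_i$ into $\theta$ (and not with a leg of a \emph{different} copy of $R$), and that the signed spinner count is forced to equal the signed crossing count $p-n$. The cleanest way to handle this is probably to verify the claim is invariant under the rotational Reidemeister moves of \cite[\S 4.3]{BH_reidemeister} — which is automatic since $(A,R,\kappa)$ is an XC-algebra, so $\mathfrak{Z}_A$ is already a well-defined invariant — and then evaluate on a standard representative of each framing class (e.g.\ the unknot with $m$ positive or negative twists, for which $\mathfrak{Z}_A = \nu^{m}$ directly from the observation $\mathfrak{Z}_A(\text{positive twist})=\nu$ recorded in the Introduction). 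Reducing every knot to its framing via the invariance then finishes the proof without any delicate diagram-chasing.
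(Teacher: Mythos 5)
Your main computation is essentially the paper's own route --- commutativity lets you factor the bead product into $\kappa^{-\mathrm{rot}(D)}\,\theta^{n_+}u^{n_-}$, then $u=\theta^{-1}$, $\kappa^2=1$ and $\nu=\kappa\theta$ finish the job --- but your exponent bookkeeping contains a genuine error. You assert that the signed spinner count equals the signed crossing count $p-n$, and relatedly that the framing is ``writhe plus rotation.'' Neither is true: the framing is the writhe $wr(D)=p-n$ alone, and the rotation number $\mathrm{rot}(D)$ of a rotational diagram differs from $wr(D)$ as an integer in general (a positive kink with the loop turned the other way has $wr=+1$ but $\mathrm{rot}=-1$; also $wr+\mathrm{rot}$ is always even, so it cannot equal the framing of any odd-framed knot). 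What is true, and what the paper invokes at exactly this step, is only the parity statement that $\mathrm{rot}(D)+wr(D)$ is even \cite[Corollary 3.7]{becerra_refined}; combined with $\kappa^2=1$ this gives $\kappa^{-\mathrm{rot}(D)}=\kappa^{wr(D)}$ and hence $\mathfrak{Z}_A(K)=(\kappa\theta)^{wr(D)}=\nu^{\mathrm{fr}(K)}$. Your argument needs this parity lemma (cited or proved) and currently replaces it with a false integer identity; that the final answer survives is an accident of $\kappa$ being an involution, not a justification. By contrast, the pairing issue you single out as ``the hard part'' is a non-issue: each copy of $R$ carries its own bound summation index, so in every monomial of the big sum commutativity brings $\alpha_i$ next to its own $\beta_i$, and the sum factors as one $\theta$ per positive crossing and one $u$ per negative crossing automatically.

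The fallback in your last paragraph does not work and cannot be repaired. Invariance of $\mathfrak{Z}_A$ under the rotational Reidemeister moves only says the value is an invariant of the framed isotopy class; it does not let you ``reduce every knot to its framing,'' because knots with equal framing need not be isotopic: the $0$-framed figure-eight knot is not related to the $0$-framed unknot by any sequence of Reidemeister moves, so computing $\mathfrak{Z}_A$ on framed unknots determines nothing about $\mathfrak{Z}_A(4_1)$. What your reduction would require is invariance under crossing \emph{changes}, which is not a Reidemeister move; it holds precisely in the triangular case $R^{-1}=R_{21}$ (this is the content of the paper's \cref{lem:triangular}), but a commutative XC-algebra need not be triangular, and indeed in the commutative case a crossing change multiplies the invariant by $\theta^{\pm 2}=\nu^{\pm 2}$, consistent with the framing changing by $\pm 2$. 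So the theorem's content --- that the invariant depends on the framing only --- must come from the direct diagrammatic computation in your first part, correctly equipped with the parity lemma above; the invariance-plus-representatives shortcut is circular.
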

\begin{proof}
Let $D$ be a rotational diagram of $K$. Recall from \cite{becerra_refined} that the rotation number $rot (D)$ of $D$ is the difference between the number of positive full rotations $C$ and negative full rotations $C^-$.  If $\theta= \sum_i \alpha_i \beta_i$ and $u= \sum_i \bar{\alpha}_i \bar{\beta}_i$ as above, and $n_\pm$ denotes the number of positive and negative crossings in the diagram,  we have the following equalities that we explain below:
\begin{align*}
\mathfrak{Z}_A(K)&= \kappa^{-rot (D)} \cdot \theta^{n_+} \cdot u^{n_-} \\
&= \kappa^{-rot (D)}\cdot \theta^{wr(D)}\\
&= \kappa^{2r+wr(D)}\cdot \theta^{wr(D)}\\
&= \kappa^{wr(D)}\cdot \theta^{wr(D)}\\
&= \nu^{\textrm{fr}(K)}.
\end{align*}
The first equality is a consequence of the commutativity of $A$, the second one is simply the definition of the writhe $wr(D)=n_+ - n_-$ and the fact that $u= \theta^{-1}$ as explained in the proof of \cref{prop:comm_XC}, the third is \cite[Corollary 3.7]{becerra_refined} stating that the sum $rot(D)+w(D)$ is always even, the forth follows from the fact that $\kappa$ is a square root of unity, and the fifth  the fact that $\nu= \theta \kappa$ and the fact that the writhe of a diagram is precisely the framing of the knot it represents.
\end{proof}

\section{The Sweedler algebra}

Let $SW$ be the \textit{Sweedler algebra}, that is, the 4-dimensional complex algebra with two generators $s,w$ with relations $$ s^2=1 \qquad , \qquad w^2=0 \qquad , \qquad sw=-ws. $$
This is the smallest non-semisimple $\mathbb{C}$-algebra that admits a Hopf algebra structure (in fact such a Hopf algebra structure is unique up to isomorphism).

For convenience, let us write $p := (1-s)/2$.
It is well-known that $SW$ is a non-commutative, non-cocommutative ribbon Hopf algebra with universal $R$-matrix $$R_\lambda=1 \otimes 1  - 2 p \otimes p + \lambda ( w \otimes w + 2 wp \otimes wp - 2 w \otimes wp ) \qquad , \qquad \lambda \in \mathbb{C}$$ and balancing element $\kappa := s$ (this is a one-parameter family of ribbon structures), see e.g. \cite[Example 2.1.7]{majid_foundations} or \cite[Example 2.4.5]{becerra_thesis}. This universal $R$-matrix is triangular in the sense that $R^{-1}=R_{21}$. It is a folklore result that the universal invariant built out of this structure as well as the Reshetikhin-Turaev invariant for any finite-dimensional representation of $SW$ will produce weak invariants that only depend on the framing. This follows at once in the setting of XC-algebras from the results of \cite{becerra_refined} and the lemma below. For that, we will say that an XC-algebra $(A,R, \kappa)$ is  \textit{triangular} if $R^{-1}=R_{21}$.

\begin{lemma}\label{lem:triangular}
Let $(A,R, \kappa)$  be a triangular XC-algebra. Then for any (long) knot $K$, its universal invariant $\mathfrak{Z}_A(K)$ is fully determined by $r:= \mathrm{fr}(K) \pmod 2$: we have 
$$\mathfrak{Z}_A(K) = \nu^{r}.$$
In particular, the universal invariant is trivial for any 0-framed knot.
\end{lemma}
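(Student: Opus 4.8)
The plan is to show that triangularity upgrades the braiding encoded by $R$ to a \emph{symmetric} one, so that a positive and a negative crossing contribute identically to $\mathfrak{Z}_A$; the invariant then cannot detect the over/under information of a diagram and collapses to a function of the framing alone. First I would unpack the hypothesis. Writing $R=\sum_i \alpha_i\otimes\beta_i$ and $R^{-1}=\sum_i\bar\alpha_i\otimes\bar\beta_i$, the triangularity condition $R^{-1}=R_{21}$ reads $\sum_i \bar\alpha_i\otimes\bar\beta_i=\sum_i\beta_i\otimes\alpha_i$, equivalently, after exchanging the two tensor factors, $\sum_i \alpha_i\otimes\beta_i=\sum_i \bar\beta_i\otimes\bar\alpha_i$. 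This is the single algebraic identity driving the whole argument.

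The heart of the proof is the following \emph{crossing-change invariance}: if $D'$ is obtained from a rotational diagram $D$ by switching a single crossing (replacing an $X$ by an $X^-$ or conversely), then $\mathfrak{Z}_A(D)=\mathfrak{Z}_A(D')$. To see this, recall that $\mathfrak{Z}_A(D)$ is a sum of ordered products of beads read off by traversing the single strand of $K$; the slot occupied by each bead in these words is dictated solely by the order in which the traversal meets the arcs, that is, by the underlying shadow (the $4$-valent graph) of $D$. Switching a crossing alters only the over/under datum, hence leaves the shadow, the traversal, and therefore the two slots of that crossing's beads unchanged; it merely replaces the ordered pair $(\alpha_i,\beta_i)$ attached to $X$ by the pair $(\bar\beta_i,\bar\alpha_i)$ attached to $X^-$ in those two slots. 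Since the beads lying between them do not depend on the summation index, the identity $\sum_i \alpha_i\otimes\beta_i=\sum_i \bar\beta_i\otimes\bar\alpha_i$ can be applied slot by slot, leaving the total sum unchanged. Thus $\mathfrak{Z}_A(D)=\mathfrak{Z}_A(D')$, even though $D$ and $D'$ need not represent the same knot.

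Finally I would combine this with two standard inputs. Presenting the $0$-framed unknot with one positive and one negative curl and switching the negative curl to a positive one produces the $2$-framed unknot, so crossing-change invariance gives $1=\mathfrak{Z}_A(U_{0})=\mathfrak{Z}_A(U_{2})=\nu^{2}$, whence $\nu^{2}=1$. On the other hand, any diagram can be turned into a diagram of the unknot by switching finitely many crossings; writing $D'$ for such an unknotting of $D$, crossing-change invariance gives $\mathfrak{Z}_A(K)=\mathfrak{Z}_A(D')=\nu^{m}$, where $m:=wr(D')$ is the framing of the unknot $D'$ represents and $\mathfrak{Z}_A(U_{m})=\nu^{m}$. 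Because each switch changes the writhe by $\pm 2$, we have $m\equiv wr(D)=\mathrm{fr}(K)\pmod 2$, and $\nu^{2}=1$ then yields $\mathfrak{Z}_A(K)=\nu^{m}=\nu^{r}$ with $r=\mathrm{fr}(K)\bmod 2$, as claimed. The main obstacle is the careful bookkeeping in the crossing-change step: one must verify that switching a crossing is a purely local operation on rotational diagrams which leaves the bead word-order intact, so that triangularity applies slot by slot; the remaining ingredients (the value $\nu$ of a positive curl, finiteness of the unknotting number, and $\mathrm{fr}(K)=wr(D)$) are standard and borrowed from \cite{becerra_refined}.
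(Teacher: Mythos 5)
Your proposal is correct and follows essentially the same route as the paper: triangularity $R=(R^{-1})_{21}$ gives crossing-change invariance of $\mathfrak{Z}_A$, unknotting by crossing switches reduces the value to a power of $\nu$, and $\nu^2=1$ (which you get from $U_0$ versus $U_2$, the paper equivalently from $\nu=\mathfrak{Z}_A$ of the negative curl $=\nu^{-1}$) together with the mod-$2$ invariance of the writhe under crossing changes yields $\mathfrak{Z}_A(K)=\nu^{r}$ with $r=\mathrm{fr}(K)\bmod 2$.
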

\begin{proof}
The condition $\sum_{i} \bar{\alpha}_i \otimes \bar{\beta}_i = \sum_i \beta \otimes \alpha$ means that $\mathfrak{Z}_A(D) = \mathfrak{Z}_A(D')$ where $D'$ is a rotational knot diagram obtained from another such $D$ by arbitrary replacing positive crossings for negative crossings and vice versa. Since any knot diagram can be turned into a diagram of the unknot (with a certain framing) by changing crossings signs, it means that $\mathfrak{Z}_A(K) $ must be a power of $\nu$, namely the value of a concatenation of positive and negative curls.  Furthermore, triangularity implies that $ \nu = \mathfrak{Z}_A (\begin{array}{c}
\centering
\includegraphics[scale=0.032]{twist_pos} \end{array}) = \mathfrak{Z}_A(\begin{array}{c}
\centering
\includegraphics[scale=0.032]{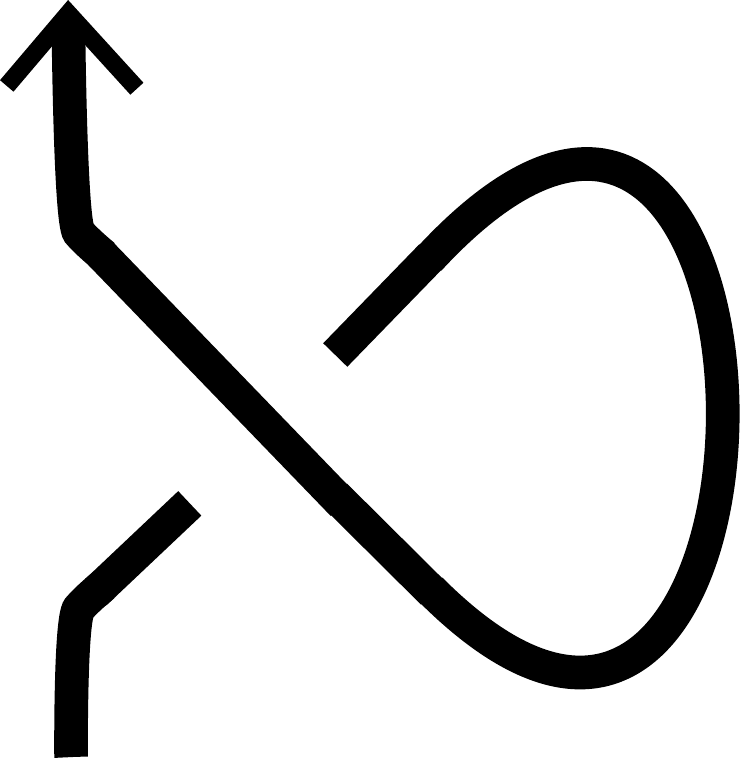} \end{array}) = \nu^{-1}$, that is, that $\nu$ is a square root of unity and therefore $\mathfrak{Z}_A(K) = \nu^r$ with $r=0,1$. That $r= \mathrm{fr}(K) \pmod 2$ follows from the fact that changing the sign of a crossing in a given diagram preserves its writhe modulo 2.
\end{proof}

However, it remains the question of whether the Sweedler algebra admits non-triangular XC-algebra structures that we can use to produce potentially non-trivial  knot invariants.  We now exhibit several multi-parameter families of XC-structures on $SW$:


\begin{example}\label{ex:1}
Let $\lambda \in \mathbb{C}$ with $\lambda \neq 0, \pm 2$. Then \begin{align*}
R_{\lambda}  := \lambda \cdot 1 \otimes 1 + (1+s+w+sw)\otimes (s+w+sw)
\end{align*}
and $\kappa:= -s-w-sw$ define a one-parameter family of XC-algebra structures on $SW$.
The inverse $R$-matrix is given by 
\begin{align*}
R_{\lambda}^{-1}  &= \frac{1}{4-\lambda^2} (1+s+w+sw)\otimes (s+w+sw) \\ &\phantom{=} + \frac{2}{\lambda (\lambda^2-4)}(s+w+sw)\otimes 1 + \frac{\lambda^2 -2}{\lambda (\lambda^2-4)} 1\otimes 1
\end{align*}
and $\kappa^{-1}=\kappa$. Note that this XC-algebra is not triangular, in particular $R_{\lambda }^{-1}$ is not a multiple of $(R_{\lambda})_{21}$.

We also remark that in this example $$ \mathfrak{Z}_{SW} (\begin{array}{c}
\centering
\includegraphics[scale=0.032]{twist_pos} \end{array}) = \nu = -1-(1+\lambda)(s+w+sw)$$ is \emph{not} central for $\lambda \neq -1$, which implies that this XC-algebra structure does not arise from a ribbon Hopf algebra structure on $SW$ by \cite[Proposition 8.2]{habiro}. On the other hand, $SW$ is not isomorphic, as an algebra, to a endomorphism algebra $\mathrm{End}_{\mathbb{C}}(V)$ for some 2-dimensional vector space (the latter is a simple algebra, $SW$ is not even semisimple), so that this XC-algebra structure does not arise either as the one induced by a 2-dimensional representation of a ribbon Hopf algebra.
\end{example}

\begin{example}\label{ex:2}
Let $\lambda, \mu, \gamma \in \mathbb{C}$ with $\lambda \neq 0$. Then 
\begin{align*}
R_{\lambda, \mu, \gamma}  &:= \lambda (s\otimes 1 - \mathbf{i} \cdot  1 \otimes s) - \lambda \gamma (w \otimes 1 - \mathbf{i} \cdot  1 \otimes w)  + \mu(w \otimes w -  sw \otimes sw) \\ &\phantom{=}  + sw \otimes w - w \otimes sw 
\end{align*}
and $\kappa:= -s+\gamma w$ define a three-parameter family of XC-algebra structures on $SW$. The inverse $R$-matrix is given by 
\begin{align*}
R_{\lambda, \mu, \gamma}^{-1}  &= \frac{1}{2 \lambda^2} (\lambda (s\otimes 1 + \mathbf{i} \cdot  1 \otimes s) - \lambda \gamma (w \otimes 1 + \mathbf{i} \cdot  1 \otimes w)  + \mu \mathbf{i}  (-w \otimes w +  sw \otimes sw) \\ &\phantom{=}  + \mathbf{i} (- sw \otimes w + w \otimes sw) )
\end{align*}
and $\kappa^{-1}=\kappa$. As in the previous example, this XC-algebra is not triangular; in fact $R_{\lambda, \mu, \gamma}^{-1}$ is not a multiple of $(R_{\lambda, \mu, \gamma})_{21}$.
\end{example}

\begin{example}\label{ex:3}
Let $\lambda_1 , \ldots , \lambda_6 \in \mathbb{C}$ with $\lambda_1 \neq 0$. Then \begin{align*}
R_{\lambda_1 , \ldots , \lambda_6}  &:= \lambda_1 \cdot 1 \otimes 1 + \lambda_2 \cdot 1 \otimes w + \lambda_3 \cdot w \otimes w + \lambda_4 \cdot 1 \otimes sw \\ &\phantom{=}  + \lambda_5 \cdot w \otimes sw + \lambda_6  \cdot sw \otimes sw +  sw \otimes w
\end{align*}
and $\kappa:= 1$ define a six-parameter family of XC-algebra structures on $SW$. Again the XC-algebra is not triangular and $R_{\lambda_1 , \ldots , \lambda_6}^{-1}$ is not a multiple of $(R_{\lambda_1 , \ldots , \lambda_6})_{21}$: we have
\begin{align*}
R_{\lambda_1 , \ldots , \lambda_6}^{-1}  &= \frac{1}{\lambda_1} 1 \otimes 1 - \frac{1}{\lambda_1^2} ( \lambda_2 \cdot 1 \otimes w + \lambda_3 \cdot w \otimes w + \lambda_4 \cdot 1 \otimes sw \\ &\phantom{=}  + \lambda_5 \cdot w \otimes sw + \lambda_6  \cdot sw \otimes sw +  sw \otimes w ).
\end{align*}
Note that similarly to \cref{ex:1}, we have in this case $$ \nu = \lambda_1 + \lambda_2 w + \lambda_4 sw$$ which  is \emph{not} central unless $\lambda_2=\lambda_4=0$, which again means  that this XC-algebra structure does not have a ribbon Hopf-algebraic origin.
\end{example}

\begin{example}\label{ex:4}
Let $\lambda_1 , \ldots , \lambda_5 \in \mathbb{C}$ with $\lambda_1 \neq 0$. Then \begin{align*}
R_{\lambda_1 , \ldots , \lambda_5}  &:= \lambda_1 \cdot 1 \otimes 1 + \lambda_2 (sw \otimes 1 -1 \otimes sw + 1 \otimes w - w \otimes 1) \\ &\phantom{=}  + \lambda_3 ( w \otimes s - s \otimes w -sw \otimes s )+ \lambda_4 \cdot w \otimes w \\ &\phantom{=}  + \lambda_5 \cdot sw \otimes w + \frac{2 \lambda_2^2 - 2 \lambda_3^2 - \lambda_1 \lambda_5}{\lambda_1}  w \otimes sw + \frac{2 \lambda_3^2 - 2 \lambda_2^2 - \lambda_1 \lambda_4}{\lambda_1}  sw \otimes w
\end{align*}
and $\kappa:= 1+2\frac{\lambda_3}{\lambda_1}(w-sw)$ define a five-parameter family of XC-structures on $SW$. Once more $R_{\lambda_1 , \ldots , \lambda_5}^{-1}$ is not a multiple of $(R_{\lambda_1 , \ldots , \lambda_5})_{21}$. In this case we note that $\kappa^{-1} \neq \kappa$.
\end{example}

It is natural to study the framed knot invariants produced by these XC-structures on $SW$. They are all non-triangular and besides the Sweedler algebra is non-commutative, so a priori they could produce any kind of numeral invariant: there is no reason to think that they will produce trivial invariants, depending only on the framing.

Strikingly, one checks computationally that they all do. In fact, so will any other XC-structure on the Sweedler algebra. This is the content of our main result:

\begin{theorem}\label{thm:main}
Any XC-algebra structure on the Sweedler algebra produces a framed knot invariant that only depends on  the framing:
$$ \mathfrak{Z}_{SW}(K)= \nu^{\mathrm{fr}(K)} . $$ In particular, this invariant is trivial for any 0-framed knot.
\end{theorem}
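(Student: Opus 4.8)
The plan is to exploit the radical filtration of $SW$ together with the functoriality of the universal invariant under algebra homomorphisms, bootstrapping from the commutative case \cref{thm:invariant_comm_XC}. The basic mechanism is that for any $\Bbbk$-algebra homomorphism $\phi\colon A\to B$ and any XC-structure $(A,R,\kappa)$, the pushed-forward triple $(B,(\phi\otimes\phi)(R),\phi(\kappa))$ is again an XC-structure — each axiom (XC0)--(XC3) is an equational identity preserved by $\phi$, and $\phi$ preserves invertibility — and since $\mathfrak{Z}$ is computed by multiplying beads, naturality gives $\phi(\mathfrak{Z}_A(K))=\mathfrak{Z}_B(K)$ for the pushed structure. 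I would set this up first as a reusable reduction device.

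\emph{First step: the semisimple part.} The Jacobson radical of $SW$ is $J=\langle w,sw\rangle$, it satisfies $J^2=0$, and $SW/J\cong\mathbb{C}\times\mathbb{C}$ is commutative. Applying the mechanism above to the quotient map $\pi\colon SW\to SW/J$ and invoking \cref{thm:invariant_comm_XC} for the (commutative) pushed structure yields $\pi(\mathfrak{Z}_{SW}(K))=\pi(\nu)^{\mathrm{fr}(K)}=\pi(\nu^{\mathrm{fr}(K)})$; in other words $\mathfrak{Z}_{SW}(K)\equiv\nu^{\mathrm{fr}(K)}\pmod J$. It then only remains to match the radical $(J\text{-})$part.

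\emph{Second step: linearising along the radical.} Writing $SW=D\oplus J$ with $D=\mathbb{C}e_1\oplus\mathbb{C}e_2$ spanned by the orthogonal idempotents $e_i=(1\mp s)/2$, the vanishing $J^2=0$ forces the ordered bead product defining $\mathfrak{Z}_{SW}(K)$ to split as its diagonal part (already handled) plus a sum of terms containing exactly one radical factor; equivalently the $J$-part is $\delta(\mathfrak{Z}_{SW}(K))$ for the derivation $\delta=\mathrm{id}-\iota\pi\colon SW\to J$. To control it I would pass to the two indecomposable projective representations $\rho_1,\rho_2\colon SW\to M_2(\mathbb{C})$, whose kernels are $\mathbb{C}(w-sw)$ and $\mathbb{C}(w+sw)$ and whose images are each isomorphic to the algebra $T$ of $2\times2$ triangular matrices. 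Since $\rho_1\oplus\rho_2$ is faithful, and $\rho_1,\rho_2$ are exchanged by the algebra automorphism $\sigma\colon s\mapsto-s,\ w\mapsto w$ (which swaps $e_1\leftrightarrow e_2$), the functoriality device reduces the whole theorem to proving $\mathfrak{Z}_{T}(K)=\nu_T^{\mathrm{fr}(K)}$ for an \emph{arbitrary} XC-structure on the $3$-dimensional algebra $T$, whose radical is now only $1$-dimensional.

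\emph{Finishing on $T$, and the main obstacle.} On $T$ the invariant reads $\mathfrak{Z}_T(K)=\bar\nu^{\mathrm{fr}(K)}+c(K)\,z$, where $z$ spans the radical, $\bar\nu$ is the image of $\nu_T$ modulo the radical, and $c(K)$ is a scalar; the goal is $c(K)=h(\mathrm{fr}(K))$, with $h(n)$ the $z$-coefficient of $\nu_T^{\,n}$. Multiplicativity of $\mathfrak{Z}$ under connected sum, combined with the first step, shows that both $c$ and $h\circ\mathrm{fr}$ satisfy the same twisted-additive relation $F(K_1\#K_2)=\nu_2^{\,\mathrm{fr}(K_2)}F(K_1)+\nu_1^{\,\mathrm{fr}(K_1)}F(K_2)$, where $\nu_1,\nu_2$ are the diagonal entries of $\bar\nu$; this is necessary but far from sufficient, since twisted additivity under connected sum says nothing about prime knots. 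The hard part — the only point where genuine knot theory beyond the commutative reduction enters — is to show $c-h\circ\mathrm{fr}$ vanishes: one must feed in the first-order content of the rotational Reidemeister moves (equivalently, determine how the single scalar $c$ transforms under one crossing change), or else carry out the now much smaller classification of XC-structures on $T$. I expect this scalar first-order analysis on $T$ to be the crux; everything preceding it is formal.
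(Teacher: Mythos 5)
Your reduction machinery is sound, and it is genuinely different from what the paper does: pushing an XC-structure forward along an algebra homomorphism preserves the axioms (XC0)--(XC3) and satisfies $\phi(\mathfrak{Z}_A(K))=\mathfrak{Z}_B(K)$; the quotient $SW/J\cong\mathbb{C}\times\mathbb{C}$ together with \cref{thm:invariant_comm_XC} settles the invariant modulo $J$; and the two surjections onto triangular algebras (e.g.\ $s\mapsto\mathrm{diag}(1,-1)$, $w\mapsto E_{12}$ resp.\ $E_{21}$) have kernels $\mathbb{C}(w-sw)$ and $\mathbb{C}(w+sw)$ meeting in zero, so faithfulness of their direct sum correctly reduces the theorem to (pushed-forward) XC-structures on the $3$-dimensional algebra $T$. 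But the argument stops exactly where the theorem lives: you do not show that the radical coefficient $c(K)$ equals $h(\mathrm{fr}(K))$, and you say so yourself. Twisted additivity under connected sum cannot close this (it constrains nothing on prime knots, as you note), and ``feed in the first-order content of the rotational Reidemeister moves'' is not yet an argument: the rotational moves are precisely what the XC axioms encode, so they guarantee that $c$ is a knot invariant, not that it is framing-determined. What is needed is a structural consequence of the axioms for the pair $(R,\kappa)$ itself, and nothing in the proposal produces it.

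That missing content is exactly what the paper's proof supplies, and it is computational rather than formal. The paper shows (\cref{prop:commutators}) that the commutators $[R^{\pm1},\kappa^{\pm1}\otimes 1]$ and $[R^{\pm1},1\otimes\kappa^{\pm1}]$ lie in $J\otimes J$, and (\cref{prop:permuting_R}) that two Greek letters with distinct indices may be transposed at the cost of an error term in $(J\otimes J\otimes A)\oplus(J\otimes A\otimes J)\oplus(A\otimes J\otimes J)$, which is then annihilated inside the bead word because $J\cdot SW\cdot J=0$ (\cref{lem:xzy}). This lets the word be rearranged as in the commutative case, yielding $\kappa^{-rot(D)}\theta^{\,n_{+,1}-n_{-,1}}\xi^{\,n_{+,2}-n_{-,2}}$ and then $\nu^{\mathrm{fr}(K)}$ via $\theta=\kappa^2\xi$ (from (XC1f)) and the identity $rot(D)+wr(D)=2(n_{+,1}-n_{-,1})$. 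Crucially, these propositions rest on Gr\"obner-basis manipulation of the polynomial system coming from (XC0), with a degenerate branch ($\lambda_2=0$) in which the \emph{full} system (XC0)--(XC3) must be solved case by case. Your reduction to $T$ would shrink that system (one radical dimension instead of two) but would still require the same kind of verification --- in effect the ``much smaller classification of XC-structures on $T$'' you defer. So the proposal is a clean and potentially useful reframing, but as written it is a reduction, not a proof: the ``crux'' you flag is the theorem. (A minor point: your $\delta=\mathrm{id}-\iota\pi$ is a $\pi$-twisted derivation, $\delta(xy)=\iota\pi(x)\delta(y)+\delta(x)\iota\pi(y)$, not a derivation, though nothing hinges on this.)
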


From the results of \cite{becerra_refined} we automatically obtain

\begin{corollary}
Any ribbon Hopf algebra structure on $SW$ produces a trivial framed knot invariant (depending only on the framing). This is also the case for the Reshetikhin-Turaev invariant produced by any finite-dimensional representation of the Sweedler algebra equipped with any ribbon Hopf algebra structure.
\end{corollary}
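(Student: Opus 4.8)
The plan is to exploit the radical filtration of $SW$ in order to replace the non-commutative bead product by a commutative one plus a single controlled correction. Write $S=\mathrm{span}\{1,s\}$ and $J=\mathrm{rad}(SW)=\mathrm{span}\{w,sw\}$, so that $SW=S\oplus J$ as vector spaces, $S\cong\mathbb{C}\times\mathbb{C}$ is a commutative subalgebra, $J$ is a two-sided ideal, and $J^2=0$. My first step is to push the XC-structure forward along the quotient $SW\twoheadrightarrow SW/J\cong\mathbb{C}\times\mathbb{C}$. Since the axioms (XC0)--(XC3) are equational in $R^{\pm 1},\kappa^{\pm 1}$ they are preserved by algebra homomorphisms, so the image is a \emph{commutative} XC-structure; and since $\mathfrak{Z}$ is natural for algebra maps, \cref{thm:invariant_comm_XC} yields $\mathfrak{Z}_{SW}(K)\equiv\nu^{\mathrm{fr}(K)}\pmod{J}$. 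It remains only to pin down the $J$-component.

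The second step is the observation that this $J$-component is \emph{linear} in the radical data. Along a long knot the beads are multiplied in a definite linear order, and because $J^2=0$ (and $J$ is an ideal) any monomial in which two or more beads contribute their $J$-part lies in $SJSJS\subseteq J^2=0$. Hence the $J$-part of $\mathfrak{Z}_{SW}(K)$ is the sum over beads $b$ of (product of the $S$-parts of the beads preceding $b$)$\cdot$($J$-part of $b$)$\cdot$(product of the $S$-parts of the beads following $b$), all surrounding products being computed in the commutative algebra $S$. Thus the radical part is governed entirely by the commutative $S$-invariant together with the $S$-bimodule structure of $J$. Diagonalising the latter with $p_\pm=(1\pm s)/2$, one has $J=\mathbb{C}a\oplus\mathbb{C}b$ with $a=p_+wp_-$ and $b=p_-wp_+$, and the two coefficients are read off by the representations $\rho_a,\rho_b\colon SW\to M_2(\mathbb{C})$ sending $s\mapsto\mathrm{diag}(1,-1)$ and $w\mapsto E_{12}$, resp. $w\mapsto E_{21}$. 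Each of $\rho_a,\rho_b$ maps $SW$ onto the $3$-dimensional algebra $T_2$ of triangular matrices, carries the XC-structure to one on $T_2$, and satisfies $\rho_{a}(\mathfrak{Z}_{SW}(K))=\mathfrak{Z}_{T_2}(K)$; moreover $\rho_a\oplus\rho_b$ is faithful (its kernel is $\mathbb{C}(w-sw)\cap\mathbb{C}(w+sw)=0$), so it recovers all of $\mathfrak{Z}_{SW}(K)$. The theorem is therefore equivalent to the identical statement for the single $3$-dimensional algebra $T_2$, whose radical is now only \emph{one}-dimensional.

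The main obstacle is this $T_2$-case. Its two diagonal entries are scalar invariants coming from the two characters of $T_2$, hence equal to $(\nu^{\mathrm{fr}(K)})$ by \cref{thm:invariant_comm_XC}, so everything collapses to the \emph{single} off-diagonal entry, a $\mathbb{C}$-valued invariant which by the second step is linear in the off-diagonal decorations. One might hope the induced $R$-matrix on $T_2$ is always triangular or valued in a commutative subalgebra, which would finish the proof at once via \cref{lem:triangular} or \cref{thm:invariant_comm_XC}; this is exactly what happens for \cref{ex:1,ex:2,ex:3} (for instance the pushforward of \cref{ex:2} lands in the diagonal subalgebra, as its $\epsilon$-terms cancel). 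However it fails in general: the pushforward of \cref{ex:4} gives $R=\lambda_1\,I\otimes I-\lambda_3\,\sigma\otimes\epsilon$ with $\sigma=\mathrm{diag}(1,-1)$ and $\epsilon=E_{12}$, which is neither triangular nor commutative-subalgebra-valued, yet $\nu=\lambda_1 I+\lambda_3\epsilon$ so the target off-diagonal entry is the non-constant $\mathrm{fr}(K)\,\lambda_1^{\mathrm{fr}(K)-1}\lambda_3$.

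I would therefore finish by classifying the XC-structures on $T_2$ directly — a far smaller polynomial system than the original one in $40$ unknowns — and checking on each resulting family that the off-diagonal bead product telescopes to $(\nu^{\mathrm{fr}(K)})_{12}$, feeding in the commutative $S$-invariant from the second step. Carrying out this finite verification, and in particular ruling out any sporadic family whose off-diagonal entry is a genuinely knot-dependent quantity, is where I expect the real work to lie.
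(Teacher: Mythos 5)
You are proving the wrong thing from scratch. The statement you were given is a corollary that the paper disposes of in one line: by the results of \cite{becerra_refined}, any ribbon Hopf algebra structure on $SW$ is in particular an XC-structure on $SW$, and the Reshetikhin--Turaev invariant of a finite-dimensional representation $\rho$ is determined by applying $\rho$ to the universal invariant (and taking a quantum trace); hence both claims follow immediately from \cref{thm:main}, which is stated just before the corollary and available to cite. Your proposal never performs this two-step deduction, and in particular never addresses the Reshetikhin--Turaev half of the statement at all. Instead, you set out to re-prove the substance of \cref{thm:main} itself, by a route genuinely different from the paper's: quotient by the radical $J$ to land in the commutative case handled by \cref{thm:invariant_comm_XC}, observe that $J^2=0$ makes the radical contribution linear in the bead decorations, and split $J$ by the faithful pair $\rho_a\oplus\rho_b$ to reduce everything to XC-structures on the three-dimensional triangular algebra $T_2$. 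These reduction steps are correct (the XC axioms are equational, so they push forward along surjective algebra maps, and $\mathfrak{Z}$ is natural for such maps), and the reduction is an appealing alternative to the paper's head-on attack.

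The genuine gap is that the decisive step is not carried out. Your own analysis shows why it cannot be waved away: the pushforward of \cref{ex:4} to $T_2$ is neither triangular nor valued in a commutative subalgebra, and the target off-diagonal entry $\mathrm{fr}(K)\,\lambda_1^{\mathrm{fr}(K)-1}\lambda_3$ genuinely depends on the framing, so neither \cref{lem:triangular} nor \cref{thm:invariant_comm_XC} closes the $T_2$ case. You then defer the classification of XC-structures on $T_2$ and the verification that the off-diagonal bead product telescopes to $\left(\nu^{\mathrm{fr}(K)}\right)_{12}$ to future work (``where I expect the real work to lie''). But that \emph{is} the theorem: it is precisely at this point that the paper invests its effort, proving via Gr\"obner-basis computations from axiom (XC0) that the commutators $[R^{\pm 1},\kappa^{\pm 1}\otimes 1]$ and $[R^{\pm 1},1\otimes\kappa^{\pm 1}]$ lie in $J\otimes J$ (\cref{prop:commutators}) and that Greek letters with distinct indices may be permuted (\cref{prop:permuting_R}), after which the commutative argument of \cref{thm:invariant_comm_XC} goes through verbatim. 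An analogous polynomial analysis would be needed on $T_2$ (a smaller system, admittedly, and it would suffice to treat all XC-structures on $T_2$, since the pushforwards are among them), but until it is done the proposal is a plan, not a proof: the possibility of a sporadic family on $T_2$ with a knot-dependent off-diagonal entry has not been excluded.
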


The rest of the note is devoted to prove \cref{thm:main}.  We will break down the proof in several steps.

\begin{lemma}\label{lem:sqrt}
Let $e_1=1, e_2=s, e_3=w, e_4=sw$ and write $$ x= \sum_{i=1}^4 \lambda_i e_i \in SW \qquad , \qquad Q= \sum_{i,j=1}^4 \mu_{ij} e_i \otimes e_j \in SW \otimes SW . $$
\begin{enumerate}
\item $x$ is invertible if and only if $\lambda_1 \neq \pm \lambda_2$, and in that case $$ x^{-1}= \frac{1}{\lambda_1^2 - \lambda_2^2}(\lambda_1 -  \lambda_2 s - \lambda_3 w - \lambda_4 sw ).$$
\item $Q$ is invertible if and only if $$ \mu_{11} + \mu_{12} \neq \pm (\mu_{21} +\mu_{22}  )  \qquad , \qquad  \mu_{11} - \mu_{12} \neq \pm (\mu_{21} -\mu_{22}  ) , $$ and $Q^{-1}$ can be written explicitly.
\end{enumerate}
\end{lemma}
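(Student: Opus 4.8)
The plan is to reduce both invertibility questions to a semisimple quotient. The structural input is that $J := \mathrm{span}_{\mathbb{C}}(w,sw)$ is a two-sided ideal of $SW$ with $J^2 = 0$: working out the multiplication table from $s^2=1$, $w^2=0$, $sw=-ws$ shows that any product of a basis element with $w$ or $sw$ lies in $J$ or vanishes. Since $J$ is nilpotent and $SW/J \cong \mathbb{C}[s]/(s^2-1) \cong \mathbb{C}\times\mathbb{C}$ is semisimple, $J$ is the Jacobson radical of $SW$. I would then invoke the standard fact that invertibility lifts along a nilpotent ideal: an element of $SW$ is a unit if and only if its image in $SW/J$ is one.

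For part (1), the image of $x$ in $SW/J$ is $\lambda_1 + \lambda_2 \bar s$, which under the isomorphism $SW/J \cong \mathbb{C}\times\mathbb{C}$ (sending $\bar s \mapsto (1,-1)$) becomes $(\lambda_1+\lambda_2,\lambda_1-\lambda_2)$; this is a unit exactly when $\lambda_1 \neq \pm\lambda_2$, which is the stated criterion. For the formula I would simply posit $y := \lambda_1 - \lambda_2 s - \lambda_3 w - \lambda_4 sw$ and verify by a short direct expansion that $xy = yx = (\lambda_1^2-\lambda_2^2)\cdot 1$; finite-dimensionality makes a one-sided check sufficient, and the absence of $w, sw$ in the product is forced by $J^2 = 0$.

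For part (2) I would rerun the argument one level up. Put $K := J\otimes SW + SW \otimes J$; a direct check using $J^2=0$ gives $K^2 \subseteq J\otimes J$ and $K^3 = 0$, while $(SW\otimes SW)/K \cong (SW/J)^{\otimes 2} \cong \mathbb{C}^4$ is semisimple, so $K$ is the radical of $SW\otimes SW$ and $Q$ is a unit iff its image in $\mathbb{C}^4$ is one. That image is the part $\mu_{11}\,1\otimes 1 + \mu_{12}\,1\otimes s + \mu_{21}\,s\otimes 1 + \mu_{22}\,s\otimes s$, whose four scalar components, obtained by sending $s\otimes 1 \mapsto \epsilon_1$ and $1\otimes s \mapsto \epsilon_2$ with $\epsilon_i \in \{\pm 1\}$, are $\mu_{11} + \epsilon_2\mu_{12} + \epsilon_1\mu_{21} + \epsilon_1\epsilon_2\mu_{22}$. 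Demanding all four be nonzero and grouping, for each fixed $\epsilon_2$, the two values $\epsilon_1 = \pm 1$ yields precisely $\mu_{11}+\mu_{12} \neq \pm(\mu_{21}+\mu_{22})$ and $\mu_{11}-\mu_{12} \neq \pm(\mu_{21}-\mu_{22})$. To exhibit $Q^{-1}$ explicitly I would split $Q = Q_0 + N$ with $Q_0 := \mu_{11}\,1\otimes 1 + \mu_{12}\,1\otimes s + \mu_{21}\,s\otimes 1 + \mu_{22}\,s\otimes s$ (a unit in the commutative subalgebra $\cong \mathbb{C}^4$, inverted componentwise) and $N := Q - Q_0 \in K$, then use the terminating Neumann series $Q^{-1} = (1 - Q_0^{-1}N + (Q_0^{-1}N)^2)\,Q_0^{-1}$, which stops because $Q_0^{-1}N \in K$ and $K^3 = 0$.

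The only genuinely content-bearing step is the regrouping of the four nonvanishing conditions into the two stated $\pm$-inequalities; verifying $K^3 = 0$ and assembling the explicit $Q^{-1}$ are routine bookkeeping, and I expect the latter to be the messiest but least conceptual part of the argument.
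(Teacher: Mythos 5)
Your proof is correct, but it takes a genuinely different route from the paper's. The paper argues by brute force: it regards $xx'=1$ and $QQ'=1\otimes 1$ as linear systems in the unknown coordinates of $x'$ and $Q'$ (a $4\times 4$ and a $16\times 16$ system), computes the determinants by machine --- $-(\lambda_1^2-\lambda_2^2)^2$ in the first case, and in the second the product $-(\mu_{11}+\mu_{12}+\mu_{21}+\mu_{22})^4(\mu_{11}+\mu_{12}-\mu_{21}-\mu_{22})^4(\mu_{11}-\mu_{12}+\mu_{21}-\mu_{22})^4(\mu_{11}-\mu_{12}-\mu_{21}+\mu_{22})^4$ --- and obtains the inverses from the adjugate matrix, with the explicit $Q^{-1}$ relegated to a Mathematica notebook. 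Your argument instead exploits the structure theory: units lift along the nilpotent ideals $J$ and $K=J\otimes SW+SW\otimes J$, so invertibility is detected in the semisimple quotients $\mathbb{C}^2$ and $\mathbb{C}^4$, and the stated conditions are exactly the nonvanishing of the character values $\lambda_1\pm\lambda_2$ and $\mu_{11}+\epsilon_2\mu_{12}+\epsilon_1\mu_{21}+\epsilon_1\epsilon_2\mu_{22}$, $(\epsilon_1,\epsilon_2)\in\{\pm1\}^2$. This buys several things the paper's computation does not: it is computer-free, it explains \emph{why} those four linear conditions appear (they are precisely the linear factors of the paper's determinant, and the exponent $4$ is accounted for by the $4$-dimensional fibre over each point of the semisimple quotient), and it produces $Q^{-1}$ in the structured closed form $(1-Q_0^{-1}N+(Q_0^{-1}N)^2)Q_0^{-1}$ via a terminating Neumann series ($Q_0^{-1}N\in K$ since $K$ is a two-sided ideal, and $K^3=0$) rather than as a $16\times 16$ adjugate. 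What the paper's approach buys is uniformity and the fully expanded explicit formula for $Q^{-1}$. One small slip worth flagging: your parenthetical claim that ``the absence of $w,sw$ in the product is forced by $J^2=0$'' is not literally true --- $J^2=0$ only kills the $J$-times-$J$ terms, while the mixed terms (semisimple part of $x$ against $J$-part of $y$ and vice versa) do land in $J$ and cancel only because of the sign flip $-\lambda_3w-\lambda_4sw$ in $y$; since you verify $xy=yx=(\lambda_1^2-\lambda_2^2)\cdot 1$ by direct expansion anyway, this does not affect correctness.
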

\begin{proof}
Write $x'= \sum_{i=1}^4 \lambda'_i e_i$ and  $ Q'= \sum_{i,j=1}^4 \mu_{ij}' e_i \otimes e_j $. Viewing the equation $xx'=1$ (resp. $QQ'=1\otimes 1$) as a system of linear equations with coefficients $\lambda_i$ (resp. $\mu_{ij}$) viewed as formal variables  and unknowns  $\lambda'_i$ (resp. $\mu_{ij}'$), we obtain by direct computation that the determinant of the matrix of the system of linear equations equals $-\left(\lambda _1^2-\lambda _2^2\right){}^2$ (resp.
\begin{align*}
- &\left(\mu _{1,1}+\mu _{1,2}-\mu _{2,1}-\mu _{2,2}\right){}^4 \left(\mu _{1,1}-\mu _{1,2}+\mu _{2,1}-\mu _{2,2}\right){}^4 \\ &\times \left(\mu _{1,1}-\mu _{1,2}-\mu _{2,1}+\mu _{2,2}\right){}^4 \left(\mu _{1,1}+\mu _{1,2}+\mu _{2,1}+\mu _{2,2}\right){}^4  \quad ).
\end{align*}
The formulas for $x'=x^{-1}$ and $Q'=Q^{-1}$ are obtained from the standard formula of the inverse matrix using the adjugate matrix. See \cref{appendix} for an explicit formula of $Q^{-1}$.
\end{proof}


Let us write  $J= \mathrm{span}_{\mathbb{C}}(w,sw)$, and note that $J$ is in fact a two-sided ideal of $SW$ ($J$ is in fact the  \textit{Jacobson radical} of $SW$).


\begin{lemma}\label{lem:xzy}
If $x,y \in J$, then $xzy=0$ for any $z \in SW$.
\end{lemma}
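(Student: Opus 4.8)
The plan is to reduce the claim to the single observation that the ideal $J$ squares to zero. The excerpt already records that $J = \mathrm{span}_{\mathbb{C}}(w, sw)$ is a two-sided ideal of $SW$, so for any $z \in SW$ and any $y \in J$ the product $zy$ again lies in $J$ (using that $J$ is a left ideal). Consequently $xzy = x \cdot (zy)$ is a product of two elements of $J$, and it therefore suffices to prove that $J \cdot J = 0$.

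To establish this, I would simply check the assertion on the spanning set $\{w, sw\}$ of $J$, using the defining relations $w^2 = 0$ and $sw = -ws$. Concretely,
\begin{align*}
w \cdot w &= 0, & w \cdot sw &= (ws)w = -s\, w^2 = 0, \\
sw \cdot w &= s\, w^2 = 0, & sw \cdot sw &= s(ws)w = -s^2 w^2 = 0.
\end{align*}
Thus every product of two spanning elements of $J$ vanishes, and by bilinearity $J^2 = 0$.

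Combining the two steps, for $x, y \in J$ and $z \in SW$ we have $zy \in J$, whence $xzy = x(zy) \in J^2 = 0$, as required. I do not expect any genuine obstacle here: the entire content is the nilpotency $J^2 = 0$ of the radical, and the two-sided ideal property (already granted) does nothing more than absorb the arbitrary middle factor $z$ into $J$.
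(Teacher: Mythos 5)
Your proof is correct and follows essentially the same route as the paper: use the ideal property of $J$ to absorb the middle factor $z$, then conclude from $J^2 = 0$ (which the paper states without the explicit spanning-set verification you supply, and which absorbs $z$ on the other side via $xz \in J$ rather than $zy \in J$ --- an immaterial difference). No gaps.
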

\begin{proof}
 Since $J$ is a two-sided ideal, then $xz \in J$, so $xzy=0$ as clearly $J^2=0$.
\end{proof}

Let us denote  $[u,v]:=uv-vu$ for the commutator of elements of $SW \otimes SW$.

\begin{proposition}\label{prop:commutators}
For any XC-structure $(R, \kappa)$ on $SW$, we have that the commutators
$$ [R,\kappa^{\pm 1} \otimes 1 ] \quad, \quad  [R, 1 \otimes \kappa^{\pm 1}] \quad, \quad [R^{-1},\kappa^{\pm 1} \otimes 1] \quad, \quad  [R^{-1}, 1 \otimes \kappa^{\pm 1}] $$ belong all to $J \otimes J$.
\end{proposition}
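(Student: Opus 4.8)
The plan is to extract from the axioms their single relevant consequence, namely that (XC0) says exactly that $R^{\pm 1}$ commutes with $\kappa \otimes \kappa$, and to combine this with two structural facts about $SW$: that the quotient $SW/J$ is commutative (it is $\mathbb{C}[s]/(s^2-1)$, since $J$ is killed), and that, as subspaces of $SW \otimes SW$, one has $(J \otimes SW) \cap (SW \otimes J) = J \otimes J$. The latter is the elementary tensor-product identity $(U \otimes X) \cap (V \otimes W) = U \otimes W$ for subspaces $U \subseteq V$ and $W \subseteq X$. Thus it suffices, for each commutator, to show that it lies both in $J \otimes SW$ and in $SW \otimes J$.

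First I would treat ``one slot at a time'' using the two quotient projections. Let $\pi \colon SW \to SW/J$ be the quotient map; then $\pi \otimes \id$ and $\id \otimes \pi$ are algebra homomorphisms, hence respect commutators. Since $SW/J$ is commutative, $\pi(\kappa) \otimes 1$ is central in $(SW/J) \otimes SW$, so $(\pi \otimes \id)([R, \kappa \otimes 1]) = [(\pi\otimes\id)(R),\, \pi(\kappa)\otimes 1] = 0$; as $\ker(\pi \otimes \id) = J \otimes SW$, this gives $[R, \kappa \otimes 1] \in J \otimes SW$. Symmetrically $1 \otimes \pi(\kappa)$ is central in $SW \otimes (SW/J)$, whence $[R, 1 \otimes \kappa] \in SW \otimes J$. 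The obstacle is that these are the ``wrong'' slots: because $\kappa$ need not be central in $SW$ itself, the projection $\id \otimes \pi$ does not obviously annihilate $[R, \kappa \otimes 1]$, and dually for $[R, 1 \otimes \kappa]$.

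The key step, where (XC0) is genuinely used, is to transfer the $J$-membership across to the other slot. Writing $\kappa \otimes \kappa = (\kappa \otimes 1)(1 \otimes \kappa)$ and applying the Leibniz rule $[R, AB] = [R,A]B + A[R,B]$ to the relation $0 = [R, \kappa \otimes \kappa]$ yields $[R, \kappa \otimes 1](1 \otimes \kappa) = -(\kappa \otimes 1)[R, 1 \otimes \kappa]$. Since $\kappa$ is invertible (\cref{lem:sqrt}) and $J$ is a two-sided ideal, conjugating an element of $SW \otimes J$ on the left by $\kappa \otimes 1$ and on the right by $1 \otimes \kappa^{-1}$ keeps it in $SW \otimes J$; hence this relation together with $[R, 1 \otimes \kappa] \in SW \otimes J$ forces $[R, \kappa \otimes 1] \in SW \otimes J$ as well. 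Combined with $[R, \kappa \otimes 1] \in J \otimes SW$ from the previous step, the intersection identity gives $[R, \kappa \otimes 1] \in J \otimes J$. Reading the same relation in the other direction, $[R, \kappa \otimes 1] \in J \otimes SW$ yields $[R, 1 \otimes \kappa] \in J \otimes SW$, which with $[R, 1 \otimes \kappa] \in SW \otimes J$ gives $[R, 1 \otimes \kappa] \in J \otimes J$.

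Finally I would observe that the argument used only that the element in the first slot of the commutator bracket commutes with $\kappa \otimes \kappa$. Since (XC0) also gives $[R^{-1}, \kappa \otimes \kappa] = 0$ and, after rearranging $R = (\kappa\otimes\kappa)R(\kappa^{-1}\otimes\kappa^{-1})$, the identities $[R^{\pm 1}, \kappa^{-1} \otimes \kappa^{-1}] = 0$ as well, the identical reasoning applies verbatim with $R$ replaced by $R^{-1}$ and with $\kappa$ replaced by $\kappa^{-1}$, covering all eight commutators in the statement. The only substantive input beyond bookkeeping is the transfer step, so I expect that to be the crux; everything else is formal.
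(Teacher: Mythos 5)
Your proof is correct, and it takes a genuinely different --- and considerably more conceptual --- route than the paper's. Every step checks out: (XC0) is indeed equivalent to $[R^{\pm 1}, \kappa \otimes \kappa]=0$; the quotient $SW/J \cong \mathbb{C}[s]/(s^2-1)$ is commutative, so the algebra maps $\pi \otimes \id$ and $\id \otimes \pi$ kill the matching-slot commutators \emph{unconditionally}, giving $[R,\kappa \otimes 1] \in J \otimes SW$ and $[R, 1 \otimes \kappa] \in SW \otimes J$; the Leibniz identity $[R,\kappa\otimes\kappa]=[R,\kappa\otimes 1](1\otimes\kappa)+(\kappa\otimes 1)[R,1\otimes\kappa]$, together with the invertibility of $\kappa$ (\cref{lem:sqrt}) and the two-sided ideal property of $J$, correctly transfers each membership to the other slot; and the intersection identity $(J\otimes SW)\cap(SW\otimes J)=J\otimes J$ is a standard fact over a field. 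The paper instead argues in coordinates: it writes $\kappa=\sum_i \lambda_i e_i$ and $R^{\pm 1}=\sum_{i,j}\mu_{ij}e_i\otimes e_j$ (resp.\ $\mu'_{ij}$), extracts polynomial relations $p_{ij}^{\pm}=0$ from (XC0), and verifies via a Gr\"obner basis computation in Mathematica that the coefficients of the commutators outside $J\otimes J$ are linear combinations of these relations, handling $\kappa^{-1}$ via the explicit formula $\kappa^{-1}=(2\lambda_1-\kappa)/(\lambda_1^2-\lambda_2^2)$ where you instead invert the commutation relation --- equally valid. Your approach buys a computer-free, shorter proof that is strictly more general: it shows that for any algebra $A$ with a two-sided ideal $I$ such that $A/I$ is commutative, any $Q\in A\otimes A$ commuting with $\kappa\otimes\kappa$ (with $\kappa$ invertible) has all such commutators in $I\otimes I$; nothing about $\dim SW=4$, the specific relations of $SW$, or even $J^2=0$ enters. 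What the paper's computation buys in exchange is the explicit Gr\"obner basis of relations derived from (XC0), which is re-used in the proof of \cref{prop:permuting_R} (including its degenerate-case analysis), where a purely structural argument like yours is not obviously available --- so if you wanted to replace the paper's proof wholesale, you would still need those relations, or a new idea, downstream.
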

\begin{proof}
Let $e_1=1, e_2=s, e_3=w, e_4=sw$ and write $$ \kappa= \sum_{i=1}^4 \lambda_i e_i \qquad , \qquad R= \sum_{i,j=1}^4 \mu_{ij} e_i \otimes e_j \qquad , \qquad R^{-1}= \sum_{i,j=1}^4 \mu_{ij}' e_i \otimes e_j  . $$
Since $\kappa$ must be invertible, \cref{lem:sqrt} implies that $\lambda_1 \neq \pm \lambda_2$ and
\begin{equation}\label{eq:kappa-1}
\kappa^{-1}= \frac{1}{\lambda_1^2 - \lambda_2^2}(2\lambda_1 -   \kappa) .
\end{equation}

On the other hand, let us write
$$ (\kappa \otimes \kappa) \cdot R^{\pm 1} \cdot (\kappa^{-1} \otimes \kappa^{-1})-R^{\pm 1} = \sum_{i,j} p_{i,j}^\pm e_i \otimes e_j,  $$
where $p_{i,j}^\pm$ are viewed as rational functions in $\lambda_i, \mu_{ij}$ and $\mu'_{ij}$ where the only denominator appearing is $\lambda_1^2 - \lambda_2^2$ or its square.

The proof will consist in showing that the relations  $p_{ij}^\pm=0$, consequence of (XC0), imply that, for each of these commutators,  the coordinates corresponding to the basis elements outside $J \otimes J= \mathrm{span}_{\mathbb{C}}(e_3 \otimes e_3, e_3 \otimes e_4, e_4 \otimes e_3,e_4 \otimes e_4)$ vanish. Let us denote $\pi: SW \otimes SW \to  (SW \otimes SW)/(J \otimes J)$ for the natural projection to the quotient, that we identify with the linear subspace spanned by the rest of basis elements $e_i \otimes e_j$ not in $J \otimes J$.

By direct computation we find that 
\begin{align*}
\pi ([R,\kappa \otimes 1 ]) &= 2(  \lambda _4 \mu _{1,2}- \lambda _2 \mu _{1,4})w \otimes 1  + 2( \lambda _3 \mu _{1,2}- \lambda _2 \mu _{1,3})sw \otimes 1 \\
&+ 2( \lambda_4 \mu _{2,2}- \lambda _2 \mu _{2,4}) w \otimes s+ 2( \lambda _3 \mu _{2,2}- \lambda _2 \mu _{2,3}) sw \otimes s  .
\end{align*}
For the first of the coefficients, we first compute 
\begin{align*}
p_{3,1}^+ &= \frac{2}{\lambda_1^2 - \lambda_2^2} ( \lambda _2^2 \mu _{1,3}- \lambda _3 \lambda _2 \mu _{1,2}+ \lambda _1 \lambda _2 \mu _{1,4}- \lambda _1 \lambda _4 \mu _{1,2})\\
p_{4,1}^+ &=  \frac{2}{\lambda_1^2 - \lambda_2^2} ( \lambda _2^2 \mu _{1,4}- \lambda _4 \lambda _2 \mu _{1,2}+ \lambda _1 \lambda _2 \mu _{1,3}- \lambda _1 \lambda _3 \mu _{1,2})
\end{align*} 
and then we simply note that 
$$ 2(  \lambda _4 \mu _{1,2}- \lambda _2 \mu _{1,4})= \lambda_2 p_{4,1}^+ - \lambda_1 p_{3,1}^+  =0 $$ by (XC0). This combination was found by computing a Gröbner basis for the $p_{ij}$ using Mathematica, treating $\lambda_1, \lambda_2$ as constants as they appear in denominators.  The vanishing of the rest of the coefficients as well as  the corresponding coefficients of $\pi ([R,1 \otimes \kappa   ]) $ is computed similarly, see \cref{appendix}.

For $[R,\kappa^{- 1} \otimes 1 ] $ and $ [R, 1 \otimes \kappa^{- 1}]$, we simply observe that as a consequence of \eqref{eq:kappa-1} we have $$ [R,\kappa^{- 1} \otimes 1 ] = -\frac{1}{\lambda_1^2 - \lambda_2^2} [R,\kappa \otimes 1 ]  \qquad , \qquad  [R,1 \otimes \kappa^{- 1}] = -\frac{1}{\lambda_1^2 - \lambda_2^2} [R,1 \otimes \kappa ] , $$
and applying $\pi$ we conclude. The statement for $ [R^{-1},\kappa^{\pm 1} \otimes 1] $ and $ [R^{-1}, 1 \otimes \kappa^{\pm 1}]$  follows verbatim using the version of (XC0) with $R^{-1}$, now with the coefficients $\mu_{ij}'$.
\end{proof}

For $k >0$, let $\mathfrak{S}_k$ be the symmetric group of order $k$. If $\sigma \in \mathfrak{S}_k$, let us write $$ \mu_\sigma : SW^{\otimes k} \to SW \qquad , \qquad x_1 \otimes \cdots \otimes x_k \mapsto  x_{\sigma^{-1}(1)} \otimes \cdots  \otimes x_{\sigma^{-1}(k)}.$$

\begin{proposition}\label{prop:permuting_R}
Let $n>0$ and $\sigma, \tau \in \mathfrak{S}_{2n}$. For any XC-structure $(R, \kappa)$ on $SW$, we have that $$  \mu_\sigma (R^{\otimes r} \otimes (R^{-1})^{\otimes n-r} ) = \mu_\tau (R^{\otimes r} \otimes (R^{-1})^{\otimes n-r} )$$ whenever $\sigma (2i)- \sigma (2i-1)$ and $\tau(2i)- \tau (2i-1)$ have the same sign for all $i=1, \ldots, n$.
\end{proposition}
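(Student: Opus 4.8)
The plan is to show that the product $\mu_\sigma(W)$, with $W:=R^{\otimes r}\otimes (R^{-1})^{\otimes n-r}$, depends only on the order of the two legs of each copy in the final word, by reducing to elementary swaps and analysing those through the radical decomposition of $SW$. First I would record a purely combinatorial reduction. Fixing the sign pattern fixes, for every copy, the relative order of its two legs in the product. Any two linear orderings of the $2n$ legs inducing the prescribed within-copy orders are shuffles of the same $n$ ordered pairs, and any two such shuffles are connected by a sequence of transpositions of two \emph{adjacent} legs belonging to \emph{different} copies (a bubble-sort that never has to swap two legs of the same pair, since their order is already prescribed). Hence it suffices to treat a single such adjacent swap: writing the relevant stretch of the product as $\Lambda\, a\, b\, P$, with $a$ a leg of copy $i$ and $b$ a leg of copy $j$, $i\neq j$, the difference of the two products is $\Lambda\,[a,b]\,P$, summed over the tensor legs of $W$.

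Next I would exploit the structure of $SW$. Since $SW/J\cong\mathbb{C}[s]/(s^2-1)$ is commutative, every commutator lands in the radical $J$; in particular $[a,b]\in J$. Moreover, by \cref{lem:xzy} together with $J^2=0$, any product of elements of $SW$ with two or more factors in $J$ vanishes. Decomposing each leg as $c+j$ with $c\in B:=\mathrm{span}_{\mathbb{C}}(1,s)$ and $j\in J$, the expansion of $\Lambda\,[a,b]\,P$ collapses: every term drawing a radical part from $\Lambda$ or $P$ dies, so one may replace $\Lambda$ and $P$ by the products of the $B$-parts of their legs, which lie in the commutative subalgebra $B$. Using the identity $cj=j\,\phi(c)$ for $c\in B$, $j\in J$, where $\phi$ is the automorphism $s\mapsto -s$ (conjugation by $s$ acts as $-1$ on $J$), the commutative contributions of the copies $k\neq i,j$ factor out of the sum entirely. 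This reduces the vanishing of the difference to the analogous statement for the two copies $i$ and $j$ in isolation.

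Finally I would settle this two-copy base case. After the reduction it amounts to finitely many identities among the coordinates $\mu_{pq}$ of $R$ and $\mu'_{pq}$ of $R^{-1}$; for example, for the two orderings of a pair of $R$'s one is led to $\sum_{p,q}\mu_{pq}\,e_p\,\theta\,e_q=\theta^2$ with $\theta=\sum_{p,q}\mu_{pq}\,e_pe_q$. I expect each of these to follow from the axioms (XC0)--(XC3) and invertibility by the same explicit-coordinate / Gröbner-basis method already used in \cref{prop:commutators}, with the computations deferred to \cref{appendix}. The all-semisimple part of the product is manifestly invariant under any permutation because $B$ is commutative, so \textbf{the crux, and the step I expect to be hardest, is the single-radical part}: unlike the semisimple part it is genuinely sensitive to the ordering of the legs, and its invariance is precisely what packages the nontrivial consequences of the XC axioms on $SW$ that have to be checked in the base case.
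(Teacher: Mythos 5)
Your proposal is correct and follows essentially the same route as the paper: reduce to adjacent swaps of two legs from different copies, observe that commutators land in the radical $J$ so that every term carrying a second radical factor dies by \cref{lem:xzy}, and verify the surviving single-radical part as a finite list of coordinate identities deduced from the XC axioms by the same Gröbner-basis method as in \cref{prop:commutators}. Your spectator-factoring via $cj=j\,\phi(c)$ is a mild repackaging of the paper's formulation, which instead checks that twelve three-tensor commutator elements of the form $\sum_{i,j}(\tilde{\alpha}_i\tilde{\alpha}_j-\tilde{\alpha}_j\tilde{\alpha}_i)\otimes\tilde{\beta}_i\otimes\tilde{\beta}_j$ lie in $U=(J\otimes J\otimes A)\oplus(J\otimes A\otimes J)\oplus(A\otimes J\otimes J)$, and your anticipation that all of (XC0)--(XC3) (not just (XC0)) are needed in the base case is borne out by the paper's case split at $\lambda_2=0$, where the full system must be solved into parametric families before the coefficients are checked to vanish.
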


In more simple terms, this means that in an expression of the form  $\mu_\sigma (R^{\otimes r} \otimes (R^{-1})^{\otimes n-r} )$, one can permute the order of any two consecutive Greek letters without changing the value of the expression so long as they have different indices. As a concrete example, this means that given an expression like $$ \sum_{i,j,k} \alpha_j \beta_k \bar{\alpha}_i \alpha_k \beta_j  \bar{\beta}_i  ,$$ the $\alpha_k$ can be placed anywhere to the right of the $\beta_k$, or the  $\beta_k$ anywhere to the left of $\alpha_k$, without altering the result of the sum (and similarly with the rest of letters).

\begin{proof}
First of all, we claim that  it suffices to check that  the differences
\begin{align*}
\sum_{i,j} (\tilde{\alpha}_i \tilde{\alpha}_j - \tilde{\alpha}_j \tilde{\alpha}_i) \otimes \tilde{\beta}_i \otimes \tilde{\beta}_j \quad &, \quad \sum_{i,j} (\tilde{\alpha}_i \tilde{\beta}_j - \tilde{\beta}_j \tilde{\alpha}_i) \otimes \tilde{\beta}_i \otimes \tilde{\alpha}_j, \\
\sum_{i,j} (\tilde{\beta}_i \tilde{\alpha}_j - \tilde{\alpha}_j \tilde{\beta}_i) \otimes \tilde{\alpha}_i \otimes \tilde{\beta}_j \quad &, \quad \sum_{i,j} (\tilde{\beta}_i \tilde{\beta}_j - \tilde{\beta}_j \tilde{\beta}_i) \otimes \tilde{\alpha}_i \otimes \tilde{\alpha}_j, 
\end{align*} 
where the tilde stands for a bar or nothing, belong all to $$U:=(J \otimes J \otimes A) \oplus (J \otimes A \otimes J) \oplus (A  \otimes J \otimes J ).$$ Indeed given an element of the form $\mu_\sigma (R^{\otimes r} \otimes (R^{-1})^{\otimes n-r} )$, the difference between such an element and the element obtained by permuting two consecutive Greek letters with different indices can be written as a certain product of one of the four elements above and other copies of $R^{\pm 1}$. Since $J$ is a two-sided ideal, this implies that the above-mentioned difference lies in $J^2=0$ and therefore the two elements are equal.

Let us now prove the claim, using the notation of \cref{prop:commutators}. There are twelve cases to analyse; we explain here the case $\Gamma:=\sum_{i,j} (\alpha_i \alpha_j - \alpha_j \alpha_i) \otimes \beta_i \otimes \beta_j $ in detail and refer the reader to \cref{appendix} for the rest of cases. Let us write $\pi: SW^{\otimes 3} \to SW^{\otimes 3}/U$ for the natural projection, and let us identify the quotient with the linear subspace of $SW^{\otimes 3}$ spanned by the basis elements $e_i \otimes e_j \otimes e_k$ not in $U$. By direct computation we obtain
\begin{align*}
\pi(\Gamma) &= (2 \mu _{1,4} \mu _{2,2}-2 \mu _{1,2} \mu _{2,4}) w \otimes w \otimes 1 + (2 \mu _{1,3} \mu _{2,2}-2 \mu _{1,2} \mu _{2,3})sw\otimes s \otimes 1 \\
&+ (2 \mu _{1,2} \mu _{2,4}-2 \mu _{1,4} \mu _{2,2})w \otimes 1 \otimes s  + (2 \mu _{1,2} \mu _{2,3}-2 \mu _{1,3} \mu _{2,2})sw \otimes 1 \otimes s.
\end{align*}
To see that each of these coefficients vanishes, we argue as follows: if $\lambda_2 \neq 0$, we directly see that these coefficients can be written as a linear combination of the elements of the Gröbner basis computed in \cref{prop:commutators} from (XC0). If $\lambda_2=0$, then this Gröbner basis forces that either $\lambda_2=\lambda_3= \lambda_4=0$ or $\mu_{12}=\mu_{21}=\mu_{22}=0$. In each of these two cases, we obtain a finite number of (parametric) solutions for the equations determined by (XC0) -- (XC3).  For each of these solutions, the coefficients of $\pi(\Gamma)$ are computed to be zero.
\end{proof}

We have now all the technical ingredients to prove the main theorem of this note. Roughly, the proof will consist in noting that the two propositions above allow us to  use an argument very similar to that of \cref{thm:invariant_comm_XC} in the commutative case.

\begin{proof}[Proof (of \cref{thm:main})]
Let $D$ be a rotational diagram of $K$, say with $n_+$ positive crossings, $n_-$ negative crossings, $m_+$ positive full rotations and $m_-$ negative full rotations. By definition,
\begin{equation}\label{eq:expr}
\mathfrak{Z}_{SW}(K) = \mu_\sigma (R^{\otimes n_+} \otimes (R^{-1})^{\otimes n_-} \otimes (\kappa^{-1})^{\otimes  m_+} \otimes  \kappa ^{\otimes  m_-} ) 
\end{equation}
for some $\sigma \in \mathfrak{S}_{2(n_+ + n_-)+ m_+ + m_-}$ determined by the diagram $D$.

The first step of the proof is to show that
\begin{equation}\label{eq:expr_2}
\mathfrak{Z}_{SW}(K) = \kappa^{- rot(D)} \cdot \mu_{\sigma'} (R^{\otimes n_+} \otimes (R^{-1})^{\otimes n_-} ) 
\end{equation}
where $\sigma' \in \mathfrak{S}_{2(n_+ + n_-)}$ is the permutation obtained from $\sigma$ by removing the last $m_+ + m_-$ indices and shifting the rest accordingly in an orderly fashion. Indeed we  claim that we can move each of the copies of $\kappa^{\pm 1}$ appearing in the summation \eqref{eq:expr}, once at a time, to the front without altering the value of $\mathfrak{Z}_{SW}(K)$. Starting by the leftmost instance of $\kappa^{\pm 1}$  in  \eqref{eq:expr}, let us suppose without loss of generality that the letter appearing before $\kappa^{\pm 1}$ is $\alpha_i$ and that $\beta_i$ appears to the right, $$\mathfrak{Z}_{SW}(K)= \sum \cdots \alpha_i \kappa^{\pm 1} \cdots \beta_i \cdots ,$$ where the dots refer to other copies of $R^{\pm 1}$ and $\kappa^{\pm 1}$. By \cref{prop:commutators}, this equals
$$\mathfrak{Z}_{SW}(K)= \sum \cdots \kappa^{\pm 1} \alpha_i  \cdots \beta_i \cdots  + \sum \cdots x_i  \cdots y_i \cdots $$
for some element $\sum_i x_i \otimes y_i \in J \otimes J$. By \cref{lem:xzy}, the second summand vanishes. We repeat the process until all copies of $\kappa^{\pm 1}$ are carried outside the summand. We conclude as $rot(D)=m_+ - m_-$.

Let us now show that $\mathfrak{Z}_{SW}(K)= \nu^{\mathrm{fr}(K)}$. Write $$ \theta := \sum_i \alpha_i \beta_i \qquad , \qquad \xi:= \sum_i \beta_i \alpha_i.  $$ As a consequence of (XC1f) and \cref{prop:commutators} we have
\begin{equation}\label{eq:theta=kappa2.xi}
\theta = \kappa^2 \xi
\end{equation}
(so $\theta$ and $\xi$ coincide if and only if $\kappa$ is a square root of unity). It immediately follows from the rotational Reidemeister move $\Omega 1 \text{f}a$ of \cite{BH_reidemeister}, \cref{prop:commutators} and \eqref{eq:theta=kappa2.xi} that
$$ \theta^{-1} = \sum_i \bar{\alpha}_i \bar{\beta}_i \qquad , \qquad \xi^{-1}= \sum_i \bar{\beta}_i \bar{\alpha}_i.  $$
Now, according to \cref{prop:permuting_R}, the expression $\mu_{\sigma'} (R^{\otimes n_+} \otimes (R^{-1})^{\otimes n_-} ) $ appearing in \eqref{eq:expr_2} equals another one where same indices appear consecutively, starting from the index of the leftmost Greek letter in the expression and bringing  the Greek letter with same index next to it, and proceeding with every index once at a time. Furthermore, \cref{prop:permuting_R} also implies that $\theta$ and $\xi$ commute.


All in  all, we have the following chain of equalities, that we explain below:
\begin{align*}
\mathfrak{Z}_{SW}(K)&= \kappa^{- rot(D)} \cdot \mu_{\sigma'} (R^{\otimes n_+} \otimes (R^{-1})^{\otimes n_-} ) \\
&= \textstyle \kappa^{- rot(D)} \left( \sum_i \alpha_i \beta_i  \right)^{n_{+,1}} \left( \sum_i \beta_i  \alpha_i  \right)^{n_{+,2}} \left( \sum_i \bar{\alpha}_i \bar{\beta}_i  \right)^{n_{-,1}} \left( \sum_i  \bar{\beta}_i \bar{\alpha}_i \right)^{n_{-,2}} \\
&= \kappa^{- rot(D)} \theta^{n_{+,1}-n_{-,1}} \xi^{n_{+,2}-n_{-,2}}\\
&= \kappa^{- rot(D) + 2(n_{+,1}-n_{-,1})} \xi^{wr(D)}\\
&= \kappa^{wr(D)} \xi^{wr(D)}\\
&=\nu^{\mathrm{fr}(K)}.
\end{align*}
Here we have written $n_{\pm,1}$ (resp. $n_{\pm,2}$) for the number of positive/negative crossings where knot traverses the understrand (resp. the overstrand) in the first place. Clearly, $n_{\pm}=n_{\pm,1} + n_{\pm,2}$. The first equality in the above chain is noting but \eqref{eq:expr_2}, the second one follows from the discussion above using \ref{prop:permuting_R} (we remind the reader that we multiply the beads placed on the knot diagram from right to left). The third equality is a consequence of the commutativity of $\theta$ and $\xi$ noted above and the formulas for their inverses, the fourth one is simply \eqref{eq:theta=kappa2.xi}, and the fifth one follows directly from the equality 
$$ rot(D)+ wr(D)= 2(n_{+,1}-n_{-,1})  $$
for a rotational knot diagram $D$, which was shown in \cite[Corollary 3.7]{becerra_refined}. The last equality is a consequence of the equality $\nu= \kappa \xi$, which follows from \cref{prop:commutators}.
\end{proof}

\appendix

\section{Computations}\label{appendix}

A computer implementation in Mathematica \cite{Mathematica} with the calculations claimed in \cref{lem:sqrt},  \cref{prop:commutators} and  \cref{prop:permuting_R} can be found at

\begin{center}
\href{https://sites.google.com/view/becerra/XCsweedler}{\texttt{https://sites.google.com/view/becerra/XCsweedler}}.
\end{center}

%


\bibliographystyle{halpha-abbrv}
\bibliography{bibliografia}

\end{document}